\newcommand{\thetitle}{The expected number of eigenvalues of a real gaussian tensor}
\newcommand{\thelanguage}{english}
\newcommand{\HC}{\mathbb{C}}
\newcommand{\HR}{\mathbb{R}}
\newcommand{\HS}{\mathbb{S}}
\newcommand{\cA}{\mathcal{A}}
\newcommand{\cC}{\mathcal{C}}
\newcommand{\cE}{\mathcal{E}}
\newcommand{\cH}{\mathcal{H}}
\newcommand{\cO}{\mathcal{O}}
\newcommand{\set}[1]{\left\{#1\right\}}
\newcommand{\cset}[2]{\left\{#1\mid #2\right\}}
\newcommand{\norm}[1]{\left\lvert #1 \right\rvert}
\newcommand{\Norm}[1]{\left\lVert #1 \right\rVert}
\newcommand{\rank}{\mathrm{rk}\,}
\newcommand{\id}{\mathrm{id}}
\renewcommand{\d}{\mathrm{ d}}
\DeclareMathOperator*{\mean}{\mathbb{E}}
\newcommand{\hV}{{\bf V}}
\newcommand{\hW}{{\bf W}}
\newcommand\restr[2]{\ensuremath{\left.#1\right|_{#2}}}
\numberwithin{equation}{section}
\numberwithin{figure}{section}
\theoremstyle{plain}
\newcounter{numbering} \numberwithin{numbering}{section}
\newtheorem{thm}[numbering]{Theorem}
\newtheorem{lemma}[numbering]{Lemma}
\newtheorem{prop}[numbering]{Proposition}
\theoremstyle{definition}
\theoremstyle{remark}
\newtheorem*{rem}{Remark}
\theoremstyle{plain}
\crefname{equation}{}{}
\crefname{equation}{}{}
\crefname{figure}{Figure}{Figures}
\crefname{section}{Section}{Sections}
\crefname{lemma}{Lemma}{Lemmata}
\crefname{prop}{Proposition}{Propositions}
\crefname{thm}{Theorem}{Theorems}
\crefname{cor}{Corollary}{Corollaries}
\crefname{dfn}{Definition}{Definitions}
\crefname{notation}{Notations}{Notations}
\crefname{rem}{Remark}{Remarks}
\crefname{claim}{Claim}{claims}
\begin{document}
\title{\thetitle}
\author{Paul Breiding}
\thanks{Institute of Mathematics, Technische Universität Berlin, breiding@math.tu-berlin.de. Partially supported by DFG research grant BU 1371/2-2.}
\begin{abstract}
\vspace{-0.2cm}
In this paper we compute the expected number of eigenvalues of a random real tensor $A \in(\HR^{n})^{\otimes(d+1)}$, whose entries are centered gaussian random variables with variance $\sigma^2= 1$.
\end{abstract}
\maketitle
\vspace{-0.5cm}
\noindent{\bf AMS subject classifications:} 15A18, 15A69, 60D05
\smallskip

\noindent{\bf Key words:} 
tensors, eigenvalues, eigenvalue distribution, random polynomials,
computational algebraic geometry 
\vspace{-0.2cm}
\section{Introduction}
Eigenvalues of tensors are a direct generalization of the concept of eigenvalues of matrices and, as the latter, have potential for a widespread amount of applications, see \cite{lim2}. For instance, the largest Z-eigenvalue \cite{qi3,qi2,qi1} of a symmetric tensor is linked to the \emph{best symmetric rank-one approximation} \cite{rank-one-approx, low-rank-approx, rank-one-approx2}, which is applicable to deflation methods in tensor decomposition \cite{rank-one-approx3}, a tool used in many areas such as blind source separation, data compression, imaging or genomic data analysis, see \cite{low-rank-approx, tensor_decomp, lathauwer-rank-one, rank-one-approx2}.

In \cite{draisma-horobet} Draisma and Horobet point out that application-oriented algorithms to compute best rank-one approximations or, more generally, best low-rank approximations are mostly of local nature. Therefore they face difficulties when close to non-minimal critical points of the distance function that one wants to minimize. This motivates them to define the number of those critical points as a complexity measure for the low-rank approximation problem and to ask for the \emph{average number of critical points} when the entries of the tensor are real gaussian random variables.

In this paper we answer a related question: \emph{What is the average number of eigenvalues of a tensor whose entries a real gaussian random variables?} Both question are related insofar as the number of critical points of the distance function from a real symmetric tensor $A$ to the set of symmetric rank-one-tensors equals the number of eigenvalues of $A$. In this article, however, $A$ is not necessarily symmetric. Nevertheless, we understand this paper as a contribution to getting more insight into Draisma's and Horobet's problem and, ultimately, more insight into the nature of rank-one approximation of real tensors.

Suppose that $A=(A_{i_0,i_1,\ldots,i_{d}}) \in(\HC^{n})^{\otimes(d+1)}$ is an order $d+1$ tensor of format $n\times \ldots \times n$. Then $A$ defines a multilinear map
	\begin{equation}\label{multilinear_map}
	\hat{A}:\underbrace{\HC^n \times \ldots \times \HC^n}_{d\text{ many times}} \to \HC^n,\; (v_1,\ldots,v_d)\mapsto (A(e_1,v_1,\ldots,v_d),\ldots, A(e_n,v_1,\ldots,v_d)),
	\end{equation}
where $e_1,\ldots,e_n$ is the standard basis in $\HC^n$. A pair $(v,\lambda)\in(\HC^n\backslash\set{0})\times \HC$ is called an \emph{eigenpair} of $A$, if  
	\begin{equation}\label{1}
	Av^d := \hat{A}(v,\ldots,v) = \lambda v.
	\end{equation}
Since for all $t\in\HC^\times$ we have $Av^d=\lambda v$, if and only if $A(tv)^d=(t^{d-1}\lambda)(tv)$, eigenpairs define points in a weighted projective space. Cartwright and Sturmfels \cite{sturmfels-cartwright} call $(v,\lambda)$, $(tv,t^{d-1}\lambda)$ \emph{equivalent} and they identify the number of equivalence classes of eigenpairs for a generic tensor $A\in(\HC^{n})^{\otimes(d+1)}$ as $D(n,d):=\sum_{i=0}^{n-1} d^i$. 

When $A$ is real, eigenpairs are invariant under complex conjugation; \emph{real eigenpairs} are eigenpairs $(v,\lambda)\in(\HR^n\backslash\set{0})\times \HR$.  Let us denote the number of equivalence classes of real eigenpairs of $A$ by $\#_\HR(A)$. 

A related notion are \emph{Z-eigenvalues} defined by Qi \cite{qi3,qi2,qi1}: If $(v,\lambda)$ is a real eigenpair of $A$, Qi calls the number $\lambda$ a Z-eigenvalue of $A$, if $v^Tv=1$. We have the equality
	\begin{equation*}
	\#_\HR(A) = \begin{cases}
	\frac{1}{2} \; \#\set{\text{ Z-eigenvalues of } A}, & \text{ if } d \text{ is even and $0$ is not an eigenvalue.} \\
	\frac{1}{2} \; \left(\#\set{\text{ Z-eigenvalues of } A} + 1\right), & \text{ if } d \text{ is even and $0$ is an eigenvalue.} \\ \#\set{\text{ Z-eigenvalues of } A}, &\text{ if } d \text{ is odd.}  \end{cases}
	\end{equation*}

Unlike for complex numbers there is no \emph{generic number} of real eigenpairs: although the function $A\mapsto\#_\HR(A)$ is constant on some open semi-algebraic subsets of $(\HR^n)^{\otimes (d+1)}$, it 'jumps' when crossing a real-codimension-one discriminant; compare \cref{fig1}. This observation motivates the probabilistic study of the eigenpair problem for real tensors presented in this article.

A natural requirement is to consider a probability distribution on $(\HR^n)^{\otimes (d+1)}$ (a space of maps!) that is invariant under an orthogonal change of variables. Note that equation \cref{1} is invariant under such a change of variables. A possible choice for this probability distribution is given by requiring that the components $A_{i_0,i_1,\ldots,i_{d}}$ are independent and identically distributed centered gaussian random variables with variance $\sigma^2=1$:
\begin{equation}\label{def_gaussian_A}
A_{i_0,i_1,\ldots,i_{d}} \sim N(0,1).
\end{equation}
In what follows we call a real tensor $A=(A_{i_0,i_1,\ldots,i_{d}})$ that has the distribution from \cref{def_gaussian_A} \emph{gaussian}.

\begin{rem}
There are other possible choices (indeed there is a whole continuous family of choices) of probability distributions on $(\HR^n)^{\otimes (d+1)}$ that are invariant under an orthogonal change of variables, but the current choice is especially interesting because it allows to make comparisons between expectations over the reals and generic answers over the complex numbers. This is due to the fact that the generic answer over the complex numbers can still be obtained as an expectation, with respect to the unique centered gaussian distribution, which is invariant under unitary change of variables and defined as in \cref{def_gaussian_A} but taking complex gaussians.
\end{rem}

For $d=1$, the matrix case, the expected value of $\#_\HR(A)$ for a real gaussian matrix was computed in~\cite{real_eigenvalues} and our paper is very much inspired by this work. For the sake of completeness we will include the results from \cite{real_eigenvalues} in our main theorem, \cref{main_thm}.	
	
Eigenpairs of tensors are nothing but eigenpairs of homogeneous polynomial systems, which we defined in \cite{homotopy_eigenpairs, distr}. To be precise, let $\cH_d$ denote the space of complex homogeneous polynomials of degree $d$ in the~$n$ variables $X_1,\ldots,X_n$. Similar to \cref{1} we call $(v,\lambda)\in(\HC^n\backslash\set{0})\times \HC$ an eigenpair of $f\in(\cH_d)^n$, if it satisfies the equation $f(v)=\lambda v$. The equivalence relation on the space of eigenpairs is defined as above: We call the pairs $(v,\lambda)$ and $(tv,t^{d-1}\lambda)$ equivalent. 

There is a canonical surjective map $(\HC^{n})^{\otimes(d+1)}\to (\cH_d)^n$, called contraction map, that maps $A$ to $f_A(X):=AX^d=\hat{A}(X,\ldots,X)$, where $X=(X_1,\ldots,X_n)$. It is easy to see that 
\begin{equation}\label{1.1}
Av^d=\lambda v, \text{ if and only if } f_A(v)=\lambda v.
\end{equation}
Let us denote by $\cH_d^\HR$ the real points in $\cH_d$, and for $f\in\cH_d^\HR$ we denote by $\#_\HR(f)$ the number of equivalence classes of real eigenpairs of $f$. If the coefficients of $f\in (\cH_d^\HR)^n$ in the \emph{Bombieri-Weyl basis} (see \cref{se:geo-framework}) are centered gaussian random variables with variance $\sigma^2=1$, we refer to $f$ as standard gaussian and write $f\sim N((\cH_d^\HR)^n)$, compare \cref{sec:random}. 

In \cref{additional_lemma} we prove that, if $A\in \left(\HR^{n}\right)^{\otimes (d+1)}$ is gaussian, then $f_A\sim N((\cH_d^\HR)^n)$. Moreover, due to~\cref{1.1} we have $\#_\HR(A)=\#_\HR(f_A)$. This implies
\begin{equation}\label{1.2}
E_{n,d}:=\mean\limits_{A \in \left(\HR^{n}\right)^{\otimes (d+1)} \text{ gaussian}} \,\#_\HR(A)\;=\mean\limits_{f\sim N((\cH_d^\HR)^n)} \,\#_\HR(f).
\end{equation}
Our main results are summarized in the following theorem.
\begin{thm}\label{main_thm}
Recall that we have defined $E_{n,d}:=\mean\limits_{A \in \left(\HR^{n}\right)^{\otimes (d+1)} \text{ gaussian}} \,\#_\HR(A)$.
\begin{enumerate}
\item We have $E_{1,d}=1$. For $n>1$ we can describe the expectation $E_{n,d}$ in terms of the Gauss hypergeometric function $~_2F_1(a,b;c;z)$ (see \cref{sec:functions}, eq. \cref{def_hypergeom}) as
	\[E_{n,d}=\frac{2^{n-1}\sqrt{d}^{\,n}\;\Gamma(n-\frac{1}{2})}{\sqrt{\pi}(d+1)^{\,n-\frac{1}{2}}\Gamma(n)}\; \left[2(n-1)~_2F_1\left(1,n-\frac{1}{2};\frac{3}{2};\frac{d-1}{d+1}\right)
	+~_2F_1\left(1,n-\frac{1}{2};\frac{n+1}{2};\frac{1}{d+1}\right)\right].\]
\item We can write $E_{n,d}$ as follows: If $n=2k>1$ is even, we have
\begin{equation*}
 E_{n,d}=\frac{1}{\sqrt{\pi}} \frac{\Gamma(n-\frac{1}{2})}{\,\Gamma(n-1)} \left (\frac{\sqrt{d}^{\,n}}{\sqrt{d+1}}\; \sum_{j=0}^{n-2} \binom{n-2}{j} \frac{ \left(-\frac{d-1}{d+1}\right)^j}{j+\frac{1}{2}}
 + 2^{n-2}\sum_{j=0}^{k-1} (-1)^j\binom{k-1}{j}  \frac{\left(\frac{1}{d+1}\right)^{j+k-\frac{1}{2}}}{j+k-\frac{1}{2}} \right).
 \end{equation*}
whereas, if $n=2k+1>1$ is odd we have 
\begin{equation*}
E_{n,d}=\frac{1}{\sqrt{\pi}} \frac{\Gamma(n-\frac{1}{2})}{\,\Gamma(n-1)} \left (\frac{\sqrt{d}^{\,n}}{\sqrt{d+1}} \sum_{j=0}^{n-2} \binom{n-2}{j} \frac{ \left(-\frac{d-1}{d+1}\right)^j}{j+\frac{1}{2}}
 + 2^{n-2}\sum_{j=0}^{k-1} (-1)^j\binom{k-1}{j}  \frac{1-\left(\frac{d}{d+1}\right)^{j+k+\frac{1}{2}}}{j+k+\frac{1}{2}} \right).
 \end{equation*}
\end{enumerate} 
Recall that we have denoted number of equivalence classes of complex eigenpairs of some generic complex tensor by $D(n,d)=\sum_{i=0}^{n-1} d^{i}$.
\begin{enumerate}[resume]
\item The asymptotic behaviour of the $E_{n,d}$ for fixed $d$ and large $n$ is 
	\[\frac{E_{n,d}}{\sqrt{D(n,d)}} \stackrel{n\to\infty}{\longrightarrow} \begin{cases} \sqrt{\frac{2}{\pi}},&\text{if } d=1. \\ 1,&\text{if } d>1.\end{cases}\]	
\item The asymptotic behaviour of the $E_{n,d}$ for fixed $n$ and large $d$ is 
	\[\frac{E_{n,d}}{\sqrt{D(n,d)}} \stackrel{d\to\infty}{\longrightarrow} 1,\quad n>1.\]	
\item The generating function of the $E_{n,d}$ for fixed $d$ is 
	\[\sum_{n=1}^\infty E_{n,d} \;z^n = \frac{z\,\left(1-z\sqrt{d}  +z\sqrt{d-2z\sqrt{d} +1}\,\right)}{(1-z^2)(1-z\sqrt{d})},\quad \text{if }\norm{z}< \frac{1}{\sqrt{d}}.\]
\end{enumerate}
\end{thm}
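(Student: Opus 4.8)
The plan is to derive the integral formula underlying part~(1) by a Kac-Rice computation, and then to extract parts~(2)--(5) from it. By \cref{1.2} it suffices to compute $\mean_{f\sim N((\cH_d^\HR)^n)}\#_\HR(f)$. Each equivalence class of real eigenpairs has exactly two representatives $(v,\lambda)$ with $\Norm{v}=1$, namely $(v,\lambda)$ and $(-v,(-1)^{d-1}\lambda)$ (using $f(-v)=(-1)^{d}f(v)$), and such a pair is an eigenpair of $f$ exactly when $F(v,\lambda):=f(v)-\lambda v=0$; hence $F$ has $2\,\#_\HR(f)$ zeros on $\HS(\HR^n)\times\HR$. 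I would therefore apply the Kac-Rice formula to the random map $F\colon\HS(\HR^n)\times\HR\to\HR^{n}$, obtaining
\[E_{n,d}=\tfrac12\int_{\HS(\HR^n)\times\HR}\rho_{F(v,\lambda)}(0)\;\mean\!\left[\,\norm{\det DF(v,\lambda)}\ \middle|\ F(v,\lambda)=0\,\right]\d(v,\lambda),\]
where $\rho_{F(v,\lambda)}$ denotes the Gaussian density of the random vector $F(v,\lambda)\in\HR^{n}$.

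Next I would exploit the orthogonal invariance of the Bombieri-Weyl Gaussian to evaluate the integrand at the single point $v=e_1$. From Kostlan's identity $\mean[f_i(v)f_j(w)]=\delta_{ij}\langle v,w\rangle^{d}$ one reads off $F(e_1,\lambda)\sim N(-\lambda e_1,I_n)$, so $\rho_{F(e_1,\lambda)}(0)=(2\pi)^{-n/2}e^{-\lambda^{2}/2}$. Writing $DF(e_1,\lambda)$ as an $n\times n$ matrix and expanding along its $\lambda$-column, which equals $-e_1$, gives $\norm{\det DF(e_1,\lambda)}=\norm{\det(\sqrt d\,G-\lambda I_{n-1})}$, where $G=(\partial_{X_j}f_i(e_1))_{i,j\ge 2}/\sqrt d$ is a real $(n-1)\times(n-1)$ Ginibre matrix; moreover $G$ is independent of the conditioning event $\{F(e_1,\lambda)=0\}$, since the two depend on disjoint Bombieri-Weyl coefficients of $f$, so the conditioning drops out. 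Integrating the now constant integrand over the sphere and simplifying the constant gives
\[E_{n,d}=\frac{1}{2^{n/2}\,\Gamma(n/2)}\int_{\HR}e^{-\lambda^{2}/2}\;\mean_{G}\norm{\det(\sqrt d\,G-\lambda I_{n-1})}\d\lambda,\]
which for $n=1$ (empty determinant) already yields $E_{1,d}=1$.

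The hard part will be evaluating, for $m=n-1$, the expectation $\mean_{G}\norm{\det(\sqrt d\,G-\lambda I_{m})}$ --- the expected modulus of the characteristic polynomial of a real $m\times m$ Ginibre matrix at the real argument $\lambda/\sqrt d$. For this I would invoke the real-Ginibre analysis of \cite{real_eigenvalues}: up to an explicit factor involving $e^{-\lambda^{2}/2}$, this quantity is the density of real eigenvalues of the real Ginibre ensemble, which \cite{real_eigenvalues} expresses in closed form as a combination of incomplete Gamma functions. Substituting it and carrying out the remaining Gaussian integral $\int_{\HR}e^{-\lambda^{2}/2}(\cdots)\d\lambda$ --- that is, integrating each incomplete-Gamma term against a Gaussian weight --- and collecting the $\Gamma$-factors should reproduce exactly the bracketed combination of Gauss hypergeometric functions $~_2F_1$ in part~(1); for $d=1$ this specialises to the matrix-case formula of \cite{real_eigenvalues}.

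Finally, parts~(2)--(5) would follow from part~(1) by special-function bookkeeping. For part~(2) I would apply Euler's transformation $~_2F_1(a,b;c;z)=(1-z)^{c-a-b}\,~_2F_1(c-a,c-b;c;z)$ to rewrite each hypergeometric term as a finite (or otherwise elementary) sum, and then evaluate the resulting half-integer sums through integral representations of $\tfrac1m$; the binomial sums emerge, the even/odd split corresponding to whether $\tfrac{n+1}2\in\HZ$. Parts~(3) and~(4) are asymptotic evaluations of these sums via Stirling's formula, using in addition the behaviour of $~_2F_1(1,n-\tfrac12;\tfrac32;z)$ as $z=\tfrac{d-1}{d+1}\to1^{-}$ for part~(4) and the closed form $D(n,d)=\sum_{i=0}^{n-1}d^{i}$. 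For part~(5) I would sum $\sum_{n\ge 1}E_{n,d}z^{n}$ term by term from part~(2), treating $E_{1,d}=1$ separately: writing $\tfrac1{j+\frac12}=\int_0^1 t^{j-1/2}\d t$ collapses the inner binomial sums, the remaining series in $n$ are elementary (for example $\sum_{n}\tfrac{\Gamma(n-1/2)}{\Gamma(n-1)}w^{n}=\tfrac{\sqrt\pi}{2}\,w^{2}(1-w)^{-3/2}$), the residual integral $\int_0^1 t^{-1/2}(a+bt)^{-3/2}\d t=\tfrac{2}{a\sqrt{a+b}}$ introduces the radical $\sqrt{d-2z\sqrt{d}+1}$, and separating even from odd $n$ gives the factor $\tfrac1{1-z^{2}}$; assembling and rationalising then produces the stated closed form, valid for $\norm{z}<1/\sqrt{d}$ where all the series converge.
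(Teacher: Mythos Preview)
Your derivation of part~(1) --- Kac--Rice on $\HS(\HR^n)\times\HR$, orthogonal invariance to reduce to $v=e_1$, and the identification of the conditional Jacobian with $\lvert\det(\sqrt d\,G-\lambda I_{n-1})\rvert$ for a real Ginibre $G$ independent of the conditioning --- is exactly the paper's argument; the paper merely phrases the Kac--Rice step in the coarea/solution-manifold language of \cite[Sec.~13.2]{BSS} (see \cref{lemma1} and \cref{shape_E}). The Ginibre input you cite is the paper's \cref{expectation_det}, and the ``carrying out the remaining Gaussian integral'' step is done via the Gradshteyn--Ryzhik identities in \cref{fancy_integrals}, which turn $\int_0^\infty x^{\mu-1}e^{-\beta x}\Gamma(\nu,\alpha x)\,\d x$ and its $\gamma$-analogue directly into $~_2F_1$'s.

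Two places where the paper's route differs from yours are worth noting. For part~(3), ``Stirling's formula'' is not the operative tool: the crux is the large-$n$ behaviour of $~_2F_1(1,n-\tfrac12;\tfrac32;\tfrac{d-1}{d+1})$, and the paper handles it by rewriting this as the incomplete beta integral $B(\tfrac12,n-1,\tfrac{d-1}{d+1})$ via \cref{hypergeom_as_beta}(1) and then invoking Temme's uniform asymptotics (\cref{beta_asymptotic}); a direct Laplace estimate on $\int_0^x t^{-1/2}(1-t)^{n-2}\,\d t$ would also do, but Stirling by itself does not control this term. For part~(5), the paper takes a genuinely different and cleaner path than summing the closed forms of part~(2): it first forms the generating function $\sum_{n\ge0}F_{n,d}(\lambda)\,z^n$ of the \emph{integrands} (the incomplete-Gamma expressions prior to the $\lambda$-integration), obtains a closed form in elementary functions and $\mathrm{erf}$, and only then applies $\mean_{\lambda\sim N(0,1)}$. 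Interchanging the $n$-sum with the $\lambda$-integral in this order bypasses your binomial-sum and $\int_0^1 t^{j-1/2}\,\d t$ manipulations entirely and parallels the proof of \cite[Theorem~5.1]{real_eigenvalues}.
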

\begin{rem}
\cref{main_thm} generalizes the result in \cite{real_eigenvalues} from matrices $(d=1)$ to tensors. More specifically, for the matrix case the first assertion is in \cite[Section 5]{real_eigenvalues}, the third assertion is \cite[Corollary 5.2]{real_eigenvalues} and the fifth assertion is \cite[Theorem 5.1]{real_eigenvalues}.
\end{rem}
\begin{figure}[t]
  \includegraphics[width=0.8\textwidth]{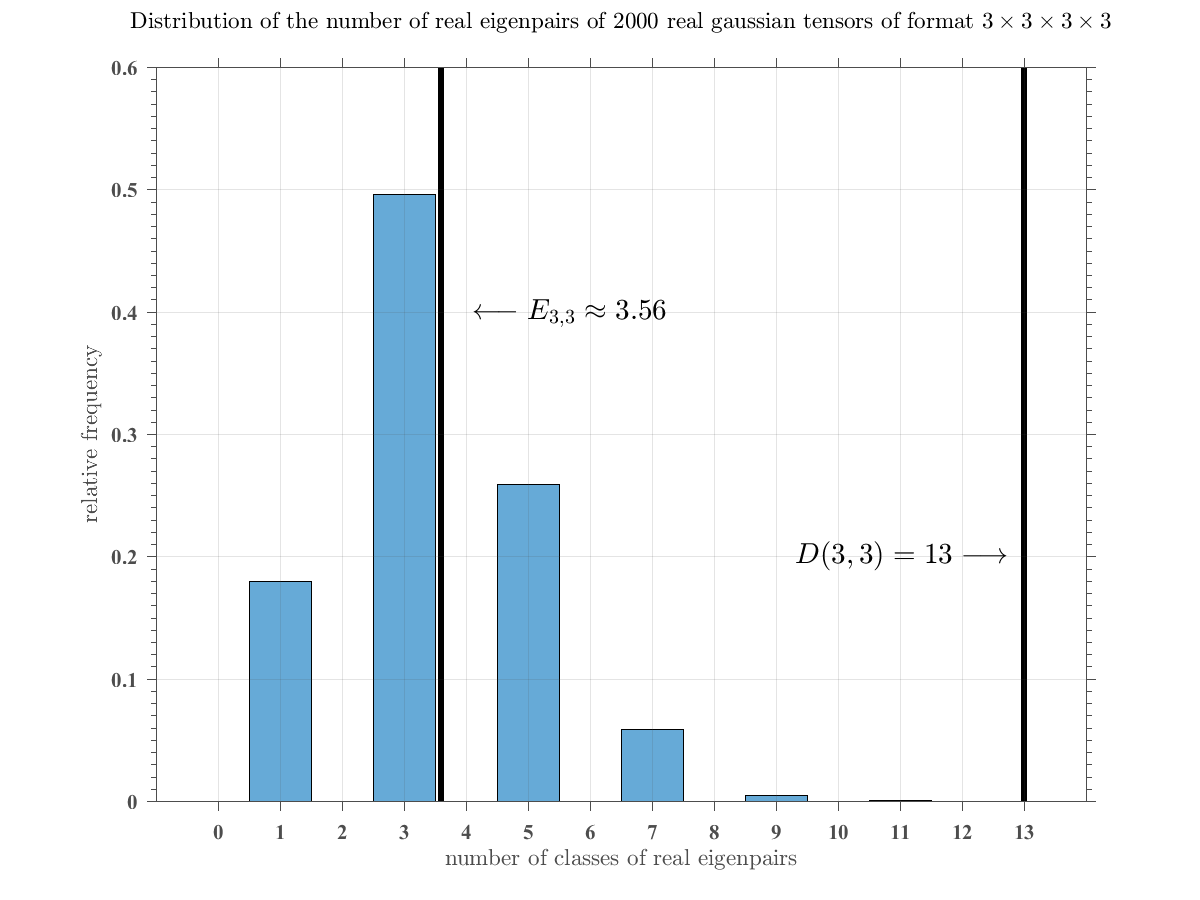}
	\caption{\small Using the \textsc{normrnd()} function in \textsc{matlab r2015b} \cite{matlab} we generated a sample of $2000$ real gaussian tensors in $\HR^{3^4}$. We used \textsc{bertini} \cite{bertini} to compute the number $\#_\HR(A)$ for each tensor. The histogram shows the relative frequencies of the $\#_\HR(A)$. The two vertical black lines represent $E_{3,3}\approx 3.56$ and $D(3,3)=13$. The reason why all the $\#_\HR(A)$ are odd numbers is that each complex eigenpair comes in a pair of conjugates, so that $\#_\HR(A) \equiv D(3,3)\equiv 1 \bmod 2$.}
\label{fig1}
\end{figure}
\subsection{Organization} 
The organization of the paper is as follows. In the next section we gather various definitions and cite theorems that we will need to prove the main theorem. In the third section we establish the geometric framework for the problem, similar to how we did in \cite[Sec. 3]{distr}. Finally, in \cref{sec:proof} we prove \cref{main_thm} and in section five we prove the lemma that implies equation \cref{1.2}.
\subsection{Acknowledgements}
The basis of this work was laid during the reunion event of the program \emph{Algorithms and Complexity in Algebraic Geometry} at the Simons Institute for the Theory of Computing. We are grateful for the Simons Institute for the stimulating environment and the financial support. We want to thank Mike Shub for pointing out to us the question about the expected number of real eigenvalues, which we answer in this paper. We further want to emphasize that this work would not have been possible without the help and support of Peter Bürgisser. The author is very thankful for all the discussions and his advice. Finally, we want to thank an anonymous referee for the detailed comments.
\section{Preliminaries}
\subsection{Differential geometry}
We denote by $\langle x,y\rangle := x^Ty$ the standard inner product on $\HR^{n}$. 
Furthermore, we set $\lVert x\rVert := \sqrt{\langle x,x\rangle}$ and 
$\HS(\HR^n):=\big\{x\in\HR^n \mid \lVert x\rVert =1\big\}$. 
We denote by
$T_x := \big\{y\in \HR^n \mid \langle x,y\rangle =0 \big\}$ 
the orthogonal complement of $x$ in $\HR^n$. If $M$ is a differentiable manifold and $x\in M$ we denote by
$T_x M$ the tangent space of $M$ at $x$. Observe that $T_x=T_x\HS(\HR^n)$.
\subsection{A general integral formula}\label{integration_formula_sec} This subsection is a summary of \cite[Sec. 13.2]{BSS}.

Let $M,N$ be Riemannian manifolds and assume that $\int_M 1 \;\d M < \infty$. Let $V\subset M\times N$ be a submanifold and assume further that $\dim V= \dim M$. Let $\pi_1: V\to M$, $\pi_2: V\to N$ be the projections onto the first and second coordinate, respectively. 

Suppose that $\pi_2$ is regular, that is every $z\in N$ is a regular value of $\pi_2$. We assume that for all regular values $x\in M$ of $\pi_1$ the fiber $\pi^{-1}(x)$ is finite. By the implicit function theorem, there exists a \emph{solution map} $S_{(x,z)}:W\to V$, defined on a neighborhood $W$ of $x$, such that $\pi_1\circ S_{(x,z)}=\mathrm{id}_W$.  Furthermore, suppose that~$\Sigma'$, the critical set of~$\pi_1$, satisfies $\dim\Sigma' < \dim V$, so that integrating over $\pi_1(V)$ we may ignore $\Sigma:=\pi_1(\Sigma')$.

For any open subset $W\subset V$ we have under the above assumptions 
	\begin{equation}\label{integration_formula}
	\int_{x\in\pi_1(U)} \lvert \pi^{-1}(x)\rvert \;\d M = \int_{z\in N} \left[ \int_{(x,z)\in \pi_1^{-1}(x)\cap U} \;\sqrt{\det\left(DS_{(x,z)}(x)\,DS_{(x,z)}(x)^T\right)} \;\;\d \pi_1^{-1}(x) \right] \d N,
	\end{equation}
provided the integrals are finite. Observe that $DS_{(x,z)}(x)$ is the linear map whose graph defines the tangent space of $V$ at $(x,z)$; in other words, we have $T_{(x,z)}V = \cset{(\dot{x},\dot{z})\in T_x M \times T_z N}{\dot{z} = DS(x)\dot{x}}$.
\begin{rem}
In \cite[Sec. 13.2]{BSS} the manifolds $M,N$ are assumed to be compact. But the deduction of formula \cref{integration_formula} can easily be extended to general $M,N$ provided $\int_M 1 \;\d M < \infty$ and $\int_{x\in\pi_1(U)} \lvert \pi^{-1}(x)\rvert \;\d M<\infty $.
\end{rem}
\subsection{The incomplete gamma, the incomplete beta and the Gauss hypergeometric function}
\label{sec:functions}
Recall the following definitions of functions: The Gamma function is defined by 
$\Gamma(n):= \int_{t=0}^\infty t^{n-1} e^{-t} \d t$ for a positive
real number $n >0$. If $n$ is a positive integer, then $\Gamma(n)=(n-1)!$. 
The \emph{upper and lower incomplete Gamma function} are denoted
\begin{equation}\label{gamma_fct}
\Gamma(n,x):= \int_{t=x}^\infty t^{n-1} e^{-t} \d t,\hspace{1cm} 
\gamma(n,x):= \int_{t=0}^x t^{n-1} e^{-t} \d t,
\end{equation}
where $x\geq 0$. The \emph{incomplete Beta function} is defined as
	\begin{equation}\label{beta_fct}
B(p,q,x):=\int_0^x t^{p-1}(1-t)^{q-1} \d t, 
\end{equation}
where $0\leq x \leq 1$ and $p,q>0$. For $a,b,c\in\HR$, $c\neq 0$, the \emph{Gauss hypergeometric function} is defined as
\begin{equation}\label{def_hypergeom}
~_2F_1(a,b;c; x) := \sum_{k=0}^\infty \frac{(a)_k\,(b)_k }{(c)_k}\, \frac{x^ k}{k!},
\end{equation}
where $(z)_k:=z(z+1)\cdot\ldots\cdot(z+k-1)$ is the Pochhammer polynomial. 
The following is \cite[eq. 6.455]{gradshteyn}.
\begin{prop}\label{fancy_integrals}
For $\alpha,\beta,\mu,\nu>0$ we have
\begin{enumerate}
\item
\[\int_{0}^\infty x^{\mu-1} e^{-\beta x} \Gamma(\nu,\alpha x) \d x = \frac{\alpha^\nu \Gamma(\mu+\nu)}{\mu(\alpha+\beta)^{\mu+\nu}}\; ~_2F_1\left(1,\mu+\nu; \mu+1; \frac{\beta}{\alpha+\beta}\right).\]
\item 
\[\int_{0}^\infty x^{\mu-1} e^{-\beta x} \gamma(\nu,\alpha x) \d x = \frac{\alpha^\nu \Gamma(\mu+\nu)}{\nu(\alpha+\beta)^{\mu+\nu}}\; ~_2F_1\left(1,\mu+\nu; \nu+1; \frac{\alpha}{\alpha+\beta}\right).\]
\end{enumerate}
\end{prop}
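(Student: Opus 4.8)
The plan is to establish both identities by expanding a lower incomplete Gamma function into a power series with \emph{nonnegative} coefficients, integrating term by term — which is legitimate by Tonelli's theorem, since every integrand that appears is nonnegative — and then recognizing the resulting series as a value of ${}_2F_1$. I would first prove the second identity and then deduce the first from it by interchanging the order of integration.

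\textbf{Step 1: a nonnegative series for $\gamma(\nu,y)$ and part (2).} Repeated integration by parts in $\gamma(\nu,y)=\int_0^y t^{\nu-1}e^{-t}\,\d t$ (integrating $t^{\nu-1}$, differentiating $e^{-t}$) gives, for $y>0$,
\[
\gamma(\nu,y)=\frac{y^\nu e^{-y}}{\nu}\sum_{k=0}^\infty \frac{y^k}{(\nu+1)_k},
\]
a series with nonnegative terms (this is the Kummer-transformed form of $\gamma(\nu,y)=\nu^{-1}y^\nu\,{}_1F_1(\nu;\nu+1;-y)$). Substituting $y=\alpha x$ into $\int_0^\infty x^{\mu-1}e^{-\beta x}\gamma(\nu,\alpha x)\,\d x$ and interchanging sum and integral, the $k$-th summand contributes $\frac{\alpha^{\nu+k}}{\nu(\nu+1)_k}\int_0^\infty x^{\mu+\nu+k-1}e^{-(\alpha+\beta)x}\,\d x=\frac{\alpha^{\nu+k}}{\nu(\nu+1)_k}\cdot\frac{\Gamma(\mu+\nu+k)}{(\alpha+\beta)^{\mu+\nu+k}}$. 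Pulling out $\Gamma(\mu+\nu)=\Gamma(\mu+\nu+k)/(\mu+\nu)_k$ and inserting $(1)_k/k!=1$ to write $\sum_k \frac{(\mu+\nu)_k}{(\nu+1)_k}z^k=\sum_k\frac{(1)_k(\mu+\nu)_k}{(\nu+1)_k}\frac{z^k}{k!}$, with $z=\alpha/(\alpha+\beta)$, yields exactly the right-hand side of (2). Since $0<z<1$ the ratio of consecutive terms tends to $z$, so the series — and hence the integral — is finite.

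\textbf{Step 2: reducing part (1) to part (2).} Writing $\Gamma(\nu,\alpha x)=\int_{\alpha x}^\infty t^{\nu-1}e^{-t}\,\d t$ and applying Tonelli to swap the order of the double integral over $\{x>0,\ t>\alpha x\}$ gives
\[
\int_0^\infty x^{\mu-1}e^{-\beta x}\,\Gamma(\nu,\alpha x)\,\d x=\int_0^\infty t^{\nu-1}e^{-t}\Big(\int_0^{t/\alpha}x^{\mu-1}e^{-\beta x}\,\d x\Big)\d t=\beta^{-\mu}\int_0^\infty t^{\nu-1}e^{-t}\,\gamma\!\big(\mu,\tfrac{\beta}{\alpha}t\big)\,\d t,
\]
the inner integral being evaluated by the substitution $s=\beta x$. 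The last expression is $\beta^{-\mu}$ times an instance of the formula from part (2) with $(\mu,\nu,\alpha,\beta)$ replaced by $(\nu,\mu,\tfrac{\beta}{\alpha},1)$; substituting it in and simplifying the elementary prefactor (using $\beta^{-\mu}(\beta/\alpha)^\mu=\alpha^{-\mu}$, $(1+\beta/\alpha)^{\mu+\nu}=\alpha^{-(\mu+\nu)}(\alpha+\beta)^{\mu+\nu}$, and $\tfrac{\beta/\alpha}{1+\beta/\alpha}=\tfrac{\beta}{\alpha+\beta}$) produces precisely the right-hand side of (1).

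\textbf{Expected difficulty.} There is no substantial obstacle: nonnegativity makes every interchange automatic via Tonelli, and finiteness throughout follows from $\alpha,\beta>0$, which keeps $\alpha/(\alpha+\beta)$ and $\beta/(\alpha+\beta)$ strictly inside $(0,1)$, i.e. inside the disk of convergence of the relevant ${}_2F_1$. The only place that calls for a little care is Step 1: the naive expansion $\gamma(\nu,y)=\sum_k\frac{(-1)^k y^{\nu+k}}{k!(\nu+k)}$ is alternating and would require a separate absolute-convergence argument to integrate term by term, whereas the Kummer-transformed series used above has nonnegative terms and lets Tonelli apply without any extra work.
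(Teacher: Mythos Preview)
Your argument is correct. Both steps are clean: the Kummer-transformed series $\gamma(\nu,y)=\nu^{-1}y^\nu e^{-y}\sum_{k\ge 0} y^k/(\nu+1)_k$ has nonnegative terms, so Tonelli justifies the termwise integration in Step~1, and the identification with ${}_2F_1(1,\mu+\nu;\nu+1;\cdot)$ via $(1)_k=k!$ is exactly right. In Step~2 the Fubini swap over $\{x>0,\ t>\alpha x\}$ and the subsequent substitution reduce part~(1) to part~(2) with the parameter swap $(\mu,\nu,\alpha,\beta)\mapsto(\nu,\mu,\beta/\alpha,1)$; the algebraic simplification of the prefactor checks out.

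As for comparison with the paper: there is nothing to compare. The paper does not prove this proposition at all --- it simply records the two formulas as a citation of \cite[eq.~6.455]{gradshteyn}. Your write-up therefore supplies a self-contained derivation where the paper relies on a table of integrals. The gain is that your proof makes transparent why the hypergeometric function appears (it is just the reindexed Gamma-integral series), and it avoids any external dependence. The only cost is a few extra lines; nothing in the paper's later use of the proposition would change.
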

The following proposition is a combination of \cite[eq. (3.9), (3.10)]{temme} and \cite[equation after (3.14)]{temme}.
\begin{prop}\label{beta_asymptotic}
Let $\mathrm{erfc}(\cdot)$ denote the complementary error function. For $p>0$, $0\leq x \leq 1$ and~$q\to\infty$ we have
	\[B(p,q,x)\sim \frac{\Gamma(p)\,\Gamma(q)}{\Gamma(p+q)} \left( \frac{1}{2} \;\mathrm{erfc}\left(-\omega\right) + b_x(p,q)\right),\]
where
	\[\omega := \left[p\ln \left(\frac{p}{p+q}\right) - p\ln(x) + q\ln\left(\frac{q}{p+q}\right) -q\ln(1-x)\right]^\frac{1}{2}\]
and
\[b_x(p,q):= \left(\frac{p}{2\pi q(p+q)}\right)^\frac{1}{2}\left(\frac{x(p+q)}{p}\right)^p \left(\frac{(1-x)(p+q)}{q}\right)^q \left(\frac{q}{p-(p+q)x} + \omega^{-1} \sqrt{\frac{p+q}{p}}\right) (1+\cO(q^{-1})).\]
\end{prop}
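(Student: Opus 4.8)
The plan is to obtain the displayed expansion directly from Temme's uniform asymptotic expansion of the regularized incomplete Beta function, whose precise location inside \cite{temme} is already recorded in the statement. Write $I_x(p,q) := B(p,q,x)\,\Gamma(p+q)/\big(\Gamma(p)\Gamma(q)\big)$ for the regularized incomplete Beta function, so that the proposition is equivalent to the analogous asymptotics for $I_x(p,q)$ followed by multiplication by $B(p,q) = \Gamma(p)\Gamma(q)/\Gamma(p+q)$, which is exactly the factor pulled out front in the claim.

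First I would recall from \cite[eq. (3.9), (3.10)]{temme} the leading form of the expansion: as $q\to\infty$, uniformly in $x\in[0,1]$, one has $I_x(p,q) = \tfrac12\,\mathrm{erfc}(-\omega) + (\text{correction})$, where Temme's reduced variable coincides, after the change of normalization discussed below, with the quantity named $\omega$ here, i.e. the one determined by $\omega^2 = p\ln\frac{p}{p+q} - p\ln x + q\ln\frac{q}{p+q} - q\ln(1-x)$, with $\mathrm{sgn}(\omega) = \mathrm{sgn}\big(x - \tfrac{p}{p+q}\big)$. The sanity checks at this step are that $\omega^2\ge 0$ — because $\omega^2 = \ln\big(x_0^{\,p}(1-x_0)^q\big) - \ln\big(x^p(1-x)^q\big)$ with $x_0 = p/(p+q)$ the maximizer of $t\mapsto t^p(1-t)^q$ on $[0,1]$ — and that the boundary values are correct, since $x\to 0$ forces $\omega\to-\infty$, hence $I_x\to 0$, and $x\to 1$ forces $\omega\to+\infty$, hence $I_x\to 1$.

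Next I would read off the first correction term from \cite[equation after (3.14)]{temme} and match it with $b_x(p,q)$. Here it is convenient to observe that $\big(\tfrac{x(p+q)}{p}\big)^p\big(\tfrac{(1-x)(p+q)}{q}\big)^q = x^p(1-x)^q/\big(x_0^{\,p}(1-x_0)^q\big) = e^{-\omega^2}$, so that $b_x(p,q)$ is, up to the $(1+\cO(q^{-1}))$ factor, a Gaussian $e^{-\omega^2}$ times the Laplace prefactor $\big(\tfrac{p}{2\pi q(p+q)}\big)^{1/2}$ times the bracket $\tfrac{q}{p-(p+q)x} + \omega^{-1}\sqrt{\tfrac{p+q}{p}}$; the last factor is precisely Temme's combination of the reciprocal of the saddle-point derivative with the $1/\omega$ term that cancels the spurious singularity at $x = x_0$, where $p-(p+q)x = 0$. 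After verifying that this matches Temme's coefficient, multiplying the whole expansion by $B(p,q)$ turns $I_x(p,q)$ into $B(p,q,x)$ and produces the stated formula. As an alternative one could bypass the citation and rederive the expansion ab initio by Laplace's method applied to $\int_0^x t^{p-1}(1-t)^{q-1}\,\d t$ around the critical point $t_0 = p/(p+q)$, but the Temme route is shorter.

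The main obstacle is bookkeeping rather than conceptual: Temme states the expansion in a different normalization and in terms of a ratio parameter and a variable $\eta$ satisfying $-\tfrac12\eta^2 = \mu\ln(x/x_0) + \ln\big((1-x)/(1-x_0)\big)$, and there may be a swap $x\leftrightarrow 1-x$, $p\leftrightarrow q$ relative to our conventions; one has to check that after these substitutions and rescalings his error-function argument becomes our $\omega$ and his first coefficient becomes our $b_x(p,q)$, with every sign right — notably the sign of $\omega$ and the sign inside $p-(p+q)x$. A secondary point is that the regime actually needed is $q\to\infty$ with $p>0$ fixed and $x\in[0,1]$ fixed; since Temme's expansion is uniform in $x$ and valid as the large parameter tends to infinity with the other parameter bounded, this is covered, but it should be confirmed against the exact hypotheses of \cite[(3.9), (3.10)]{temme}.
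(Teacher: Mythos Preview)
Your proposal is correct and matches the paper's own treatment: the paper does not prove this proposition at all but simply records it as a direct combination of \cite[eq.\ (3.9), (3.10)]{temme} and \cite[equation after (3.14)]{temme}, which is exactly the route you outline. Your additional bookkeeping on matching Temme's variables to the $\omega$ and $b_x(p,q)$ here is more detail than the paper provides, but the approach is identical.
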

Next we give some relations between the Gauss hypergeometric function and the beta function.
\begin{prop}\label{hypergeom_as_beta}
Let $b,c,\nu\in\HR$ and $0< x <1$. 
\begin{enumerate}
\item If $c-1>0, b-c+1>0$, then $~_2F_1(1,b;c;x)= (c-1) (1-x)^{c-b -1} x^{1-c} B(c-1,b-c+1,x)$.
\item If $m$ is a positive integer, then 
\[B(\nu,m,x)=x^\nu \sum_{j=0}^{m-1} \binom{m-1}{j}  \frac{(-x)^j}{j+\nu}.\]
\item If $m,n$ are non-negative integers, then 
\[B\left(n+\frac{1}{2},m+1,x\right)=x^{n+\frac{1}{2}} \sum_{j=0}^{m} \binom{m}{j}  \frac{(-x)^j}{j+n+\frac{1}{2}}\] 
and 
\[B\left(n+1,m+\frac{1}{2},x\right)=\sum_{j=0}^{n} (-1)^j \binom{n}{j} \frac{1-(1-x)^{j+m+\frac{1}{2}}}{j+m+\frac{1}{2}}.\]
\item For all $n$ we have
	\[~_2F_1\left(1,n-\frac{1}{2};\frac{3}{2};x\right)= \frac{1}{2(1-x)^{n-1}} \sum_{j=0}^{n-2} \binom{n-2}{j} \frac{(-x)^j}{j+\frac{1}{2}}.\]
\item If $n=2k$ is even, then
	\[~_2F_1\left(1,n-\frac{1}{2};\frac{n+1}{2};x\right)= \frac{n-1}{2(1-x)^{k}}  \sum_{j=0}^{k-1} \binom{k-1}{j}  \frac{(-x)^{j}}{j+k-\frac{1}{2}}.\]
\item If $n=2k+1$ is odd, then
	\[~_2F_1\left(1,n-\frac{1}{2};\frac{n+1}{2};x\right)= \frac{n-1}{2(1-x)^{k+\frac{1}{2}} x^k}\sum_{j=0}^{k-1} (-1)^j \binom{k-1}{j} \frac{1-(1-x)^{j+k+\frac{1}{2}}}{j+k+\frac{1}{2}} \]
\end{enumerate}
\end{prop}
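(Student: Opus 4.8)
Here is how I would prove Proposition~\ref{hypergeom_as_beta}.

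\medskip
\noindent\emph{Overall structure.} The six items form a small hierarchy: item~(1) is the substantive one, items~(2) and~(3) are elementary integrations of the defining integral \cref{beta_fct}, and items~(4)--(6) are obtained by specialising $b=n-\frac12$ and $c\in\{\frac32,\frac{n+1}{2}\}$ in item~(1) and then rewriting the resulting incomplete Beta function by item~(2) or item~(3). So I would prove (1), (2), (3) first and then assemble (4)--(6).

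\medskip
\noindent\emph{Item (1).} The plan is to pass through the Euler integral representation of the Gauss hypergeometric function. Substituting $s=xt$ in \cref{beta_fct} gives
\[
B(c-1,b-c+1,x)=x^{c-1}\int_0^1 t^{c-2}(1-xt)^{b-c}\,\d t ,
\]
and since $c>1$ the integral on the right is exactly Euler's integral for ${}_2F_1$, equal to $\frac{1}{c-1}\,{}_2F_1(c-b,c-1;c;x)$. Applying Euler's transformation ${}_2F_1(\alpha,\beta;\gamma;x)=(1-x)^{\gamma-\alpha-\beta}\,{}_2F_1(\gamma-\alpha,\gamma-\beta;\gamma;x)$ with $\alpha=c-b$, $\beta=c-1$, $\gamma=c$ turns this into $(1-x)^{b-c+1}\,{}_2F_1(1,b;c;x)$; solving for ${}_2F_1(1,b;c;x)$ yields the stated identity. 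An alternative I would keep in reserve: set $\phi(x):=x^{c-1}(1-x)^{b-c+1}\,{}_2F_1(1,b;c;x)$, observe $\phi(0)=0$, and verify $\phi'(x)=(c-1)x^{c-2}(1-x)^{b-c}$ by a term-by-term computation on the power series \cref{def_hypergeom} using the Pochhammer identity $(c-1)_{k-1}(c+k-2)=(c-1)(c)_{k-1}$, then integrate. I expect item~(1) to be the only genuine obstacle, and even there the work is just identifying the correct integral representation and transformation and then tracking exponents of $x$ and $1-x$.

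\medskip
\noindent\emph{Items (2) and (3).} For item~(2), since $m-1$ is a non-negative integer I would expand $(1-s)^{m-1}$ by the binomial theorem in $B(\nu,m,x)=\int_0^x s^{\nu-1}(1-s)^{m-1}\,\d s$ and integrate term by term. The first identity of item~(3) is then the special case $\nu=n+\frac12$, $m\mapsto m+1$ of item~(2). For the second identity of item~(3) I would instead substitute $u=1-s$ in $B(n+1,m+\frac12,x)=\int_0^x s^{n}(1-s)^{m-\frac12}\,\d s$ to get $\int_{1-x}^{1}(1-u)^{n}u^{m-\frac12}\,\d u$, expand $(1-u)^{n}$ binomially, and integrate; the lower limit $1-x$ produces the factors $1-(1-x)^{j+m+\frac12}$.

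\medskip
\noindent\emph{Items (4)--(6).} The plan is to substitute into item~(1) the data: for (4), $b=n-\frac12$, $c=\frac32$, so $c-1=\frac12$ and $b-c+1=n-1$; for (5) with $n=2k$, $b=n-\frac12$, $c=\frac{n+1}{2}$, so $c-1=k-\frac12$ and $b-c+1=k$; for (6) with $n=2k+1$, $b=n-\frac12$, $c=\frac{n+1}{2}=k+1$, so $c-1=k$ and $b-c+1=k+\frac12$. In each case I would then rewrite $B(c-1,b-c+1,x)$ using item~(2) for (4) and (5) and the second identity of item~(3) for (6); the prefactor $(c-1)\,x^{1-c}(1-x)^{c-b-1}$ coming from item~(1) cancels the power $x^{c-1}$ contributed by item~(2)/(3), leaving precisely the claimed right-hand sides. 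One also checks that the hypotheses $c-1>0$, $b-c+1>0$ of item~(1) hold in all three cases, which they do for $n\ge 2$.
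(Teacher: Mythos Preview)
Your proposal is correct and follows the same overall structure as the paper: parts (4)--(6) are obtained exactly as the paper does, by specialising $b=n-\tfrac12$, $c\in\{\tfrac32,\tfrac{n+1}{2}\}$ in (1) and then invoking (2) or the second identity of (3). The only difference is that for parts (1)--(3) the paper simply cites entries in the \emph{Atlas of Functions} \cite{atlas}, whereas you supply self-contained arguments (Euler's integral representation plus Euler's transformation for (1); binomial expansion and termwise integration for (2) and (3)). Your route is more elementary and keeps the exposition independent of the reference; the paper's route is shorter on the page but relies on the reader looking up the identities.
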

\begin{proof}
Part (1) can be found in the first table in \cite[Sec. 60:4]{atlas}, Part (2) is \cite[58:4:6]{atlas} and Part (3) is \cite[58:4:7 and 58:4:8]{atlas}. Finally, (4) is a combination of (1) and (2) applied to the special case $b=n-\frac{1}{2}, c=\frac{3}{2}$ and (5)--(6) follow from (1) and (3) with $b=n-\frac{1}{2}, c=\frac{n+1}{2}$.
\end{proof}
\subsection{The expected absolute value of the characteristic polynomial of a gaussian matrix}\label{sec:random}
We say that a random variable $z\in\HR$ is \emph{centered normal with variance $\sigma^2$} if $z$ is distributed with the density function $\frac{1}{\sqrt{2\pi}}\cdot \exp\big(-\tfrac{z^2}{2\sigma^2}\big),$ and we write $z\sim N(0,\sigma^2)$ for this distribution. The following lemma is well-known:
\begin{lemma}\label{sum_of_normal}
\begin{enumerate}
\item Suppose that $z\sim N(0,\sigma^2)$ and $t\in\HR\backslash\set{0}$. Then $tz\sim N(0,t^2\sigma^2)$.
\item If $z_i\sim N(0,\sigma_i^2), i=1,2$, are independent, then $z_1+z_2\sim N(0,\sigma_1^2+\sigma_2^2)$.
\end{enumerate}
\end{lemma}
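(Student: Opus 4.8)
The plan is to verify both statements directly from the defining density of a centered normal random variable, since no deeper probability theory is needed. Throughout write $p_\sigma(z):=\tfrac{1}{\sqrt{2\pi}\,\sigma}\exp\!\big(-\tfrac{z^2}{2\sigma^2}\big)$ for the density of $N(0,\sigma^2)$.

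For part (1) I would perform a change of variables. The random variable $w:=tz$ has, by the transformation rule for densities of smooth injective maps, density $w\mapsto \tfrac{1}{\norm{t}}\,p_\sigma(w/t)$. Substituting and simplifying gives
\[
\tfrac{1}{\norm{t}}\,p_\sigma(w/t)=\tfrac{1}{\sqrt{2\pi}\,\norm{t}\sigma}\exp\!\Big(-\tfrac{w^2}{2t^2\sigma^2}\Big)=p_{\norm{t}\sigma}(w),
\]
which by definition is the density of $N(0,t^2\sigma^2)$. This step is entirely mechanical.

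For part (2), since $z_1,z_2$ are independent the density of the sum $z_1+z_2$ is the convolution $p_{\sigma_1}\ast p_{\sigma_2}$, so I would compute
\[
(p_{\sigma_1}\ast p_{\sigma_2})(w)=\int_{\HR} p_{\sigma_1}(s)\,p_{\sigma_2}(w-s)\,\d s
\]
by rewriting the exponent $-\tfrac{s^2}{2\sigma_1^2}-\tfrac{(w-s)^2}{2\sigma_2^2}$ in the form $-\tfrac{a}{2}(s-m)^2-\tfrac{w^2}{2(\sigma_1^2+\sigma_2^2)}$ with $a=\tfrac{\sigma_1^2+\sigma_2^2}{\sigma_1^2\sigma_2^2}$ and $m=\tfrac{\sigma_2^2\,w}{\sigma_1^2+\sigma_2^2}$ (completing the square in $s$), and then using $\int_{\HR}\exp\!\big(-\tfrac{a}{2}(s-m)^2\big)\,\d s=\sqrt{2\pi/a}$. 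The remaining prefactors collapse to $\tfrac{1}{\sqrt{2\pi(\sigma_1^2+\sigma_2^2)}}$, yielding $p_{\sqrt{\sigma_1^2+\sigma_2^2}}(w)$, i.e. the density of $N(0,\sigma_1^2+\sigma_2^2)$.

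A cleaner alternative for part (2) is to use Fourier analysis: the characteristic function of $N(0,\sigma^2)$ is $\xi\mapsto\exp(-\sigma^2\xi^2/2)$, independence turns the characteristic function of $z_1+z_2$ into the product $\exp(-(\sigma_1^2+\sigma_2^2)\xi^2/2)$, and uniqueness of characteristic functions identifies the law as $N(0,\sigma_1^2+\sigma_2^2)$. Either way there is no real obstacle here; the only mildly delicate point is the bookkeeping of constants when completing the square, which the Fourier argument avoids entirely, so I would likely present that version (or simply cite it as well-known, as the statement already indicates).
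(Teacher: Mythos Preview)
Your proposal is correct. The paper does not prove this lemma at all; it simply introduces it with ``The following lemma is well-known'' and moves on, exactly as you anticipate in your final sentence. Your density computation for (1) and either the convolution-by-completing-the-square or the characteristic-function argument for (2) are standard and valid, so there is nothing to compare beyond the fact that you supply a proof where the paper supplies none.
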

We say that $z$ is \emph{standard normal}, if $\sigma^2=1$. More generally, if $E$ is a finite dimensional real vector space with inner product, we define the standard normal density on the space $E$ as 	
	\begin{equation*}\label{varphi_E}
	\varphi_E(z) :=   \frac{1}{\sqrt{2\pi}^{\,\dim (E)}}\cdot \exp\left(-\frac{\lVert z\rVert^2}{2}\right).
	\end{equation*}
If it is clear from the context which space is meant, we sometimes omit the subscript~$E$ in $\varphi_E$. If $z\in E$ is a random variable with density $\varphi_E$, we write $z\sim N(E)$ (compare the notion $N((\cH_d^\HR)^n)$ from the introduction).
\begin{thm}\label{expectation_det}
Let $I_{n}$ denote the $n\times n$-identity matrix. We have for standard gaussian $A\in\HR^{n\times n}$ and fixed $t\in \HR$ 
\[\mean\limits_{A\sim N(\HR^{n\times n})} \lvert \det (A+tI_{n})\rvert =  \frac{\sqrt{2}^{\,n}}{\sqrt{\pi}}\;\frac{\Gamma\left(\frac{n+1}{2}\right)}{\Gamma(n)}\;  \left[e^{\frac{t^2}{2}}\; \Gamma(n,t^2) + 2^{n-1}\left(\frac{t^2}{2}\right)^{\frac{n}{2}} \gamma\left(\frac{n}{2},\frac{t^2}{2}\right)\right],\]
where $\Gamma(n,z)$ and $\gamma(n,z)$ are the upper and lower incomplete gamma function, respectively.
\end{thm}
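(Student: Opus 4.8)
The plan is to reduce this Gaussian matrix integral to the spectral statistics of a real Gaussian (real Ginibre) matrix. Writing $p_A(x)=\det(xI_n-A)$ for the characteristic polynomial, one has $|\det(A+tI_n)|=|p_A(-t)|=\prod_{i=1}^n|\lambda_i(A)+t|$, so the quantity in question is an averaged \emph{absolute} characteristic polynomial. The absolute value is exactly what makes this nontrivial: dropping it gives the elementary identity $\mean_A\det(A+tI_n)=t^n$ (in the Leibniz expansion only the identity permutation survives, since the entries are centred), but $\mean_A|\det(A+tI_n)|$ is not a polynomial in $t$ and cannot be extracted by Gaussian calculus alone — the real part of the spectrum of $A$ has to enter.

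First I would split $\prod_i|\lambda_i(A)+t|$ according to the real eigenvalues and the complex–conjugate pairs: if $A$ has real eigenvalues $r_1,\dots,r_\ell$ and pairs $z_j,\bar z_j$ with $\Im z_j>0$, then $|\det(A+tI_n)|=\prod_i|r_i+t|\cdot\prod_j|z_j+t|^2$, the complex factors being automatically non-negative. I would then integrate this against the explicit joint density of the eigenvalues of the real Ginibre ensemble — a finite sum over $\ell\equiv n\bmod 2$ of a density proportional to a Vandermonde factor in all the eigenvalues, times $\prod_i e^{-r_i^2/2}$, times $\prod_j e^{y_j^2-x_j^2}\mathrm{erfc}(\sqrt2\,|y_j|)$, with the normalising constant a known product of Gamma values. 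Performing the $r_i$- and $z_j$-integrals and resumming over $\ell$ should collapse the whole expression to one-dimensional integrals; after the substitutions $s=r^2$ in the part coming from the complex pairs and $s=r^2/2$ in the part coming from a real eigenvalue, these become precisely $\Gamma(n,t^2)$ and $\gamma(\tfrac n2,\tfrac{t^2}{2})$. The two surviving constants are pinned down by the normalisation of the ensemble, or equivalently by matching the value at $t=0$, where $\mean_A|\det A_n|=\sqrt{2}^{\,n}\,\Gamma(\tfrac{n+1}{2})/\sqrt\pi$ follows at once from a $QR$-decomposition (the diagonal of $R$ consists of independent $\chi$-distributed entries). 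The duplication formula $\Gamma(n)=\pi^{-1/2}2^{\,n-1}\Gamma(\tfrac n2)\Gamma(\tfrac{n+1}{2})$ then rearranges the prefactors into the stated form.

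A second route, closer to the methods used in the rest of the paper, is to apply the integral geometry formula of \cref{integration_formula_sec} to the eigenpair incidence variety $\{(B,v,\lambda):Bv=\lambda v\}$ of an $(n+1)\times(n+1)$ matrix: by orthogonal invariance one may fix $v$ to be a coordinate vector, and then the Kac–Rice integrand factors into the Gaussian density $\varphi_{\HR^{n+1}}(\lambda v)$ and the absolute determinant of the compression of $B-\lambda I_{n+1}$ to $T_v$, which is distributed exactly as $A-\lambda I_n$ with $A\sim N(\HR^{n\times n})$ and is independent of the first factor. This identifies $\mean_A|\det(A+tI_n)|$, up to an explicit Gaussian factor and a ratio of sphere volumes, with the density of real eigenvalues of a size-$(n+1)$ real Gaussian matrix at $t$, whose closed form is the content of \cite{real_eigenvalues}; solving for the expectation gives the theorem. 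In either approach the work is concentrated in the same place, and that is the main obstacle: turning the remaining one-dimensional integrals into the two incomplete-gamma terms. It is not deep, but it must be organised carefully, since the two terms mix the contributions of complex-conjugate eigenvalues and of real eigenvalues near $-t$ rather than arising from disjoint parts of the computation.
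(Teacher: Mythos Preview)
Both of your routes are correct in outline, but the paper's proof is far shorter and uses neither. It sets $B:=(A+tI_n)^T(A+tI_n)$, observes that $B$ has the noncentral Wishart distribution (with noncentrality matrix $t^2 I_n$), and that $|\det(A+tI_n)|=\det(B)^{1/2}$. The half-moment $\mean[\det(B)^{1/2}]$ is a standard object in multivariate statistics, computed in closed form in \cite[Theorem~10.3.7]{muirhead}; the paper then cites \cite[Theorem~4.1]{real_eigenvalues} to recast that expression into the stated incomplete-gamma form. No joint eigenvalue density, no sum over the number of real eigenvalues, no Kac--Rice.

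Your second route is essentially the paper's later argument (\cref{lemma1} and \cref{shape_E}) run in reverse: there the present theorem is \emph{used}, via the coarea formula, to compute the expected number of real eigenpairs, whereas you propose to start from the known real-eigenvalue density of \cite{real_eigenvalues} and solve for the expected absolute determinant. That is logically fine --- the Edelman--Kostlan--Shub density is established independently --- but within this paper it would make the dependency circular in spirit. Your first route, integrating $\prod_i|\lambda_i+t|$ against the full real-Ginibre eigenvalue law and resumming over the number $\ell$ of real eigenvalues, amounts to redoing the core computation of \cite{real_eigenvalues} from scratch; it would work, but the Wishart shortcut renders all of that unnecessary. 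The one thing your sketch buys is intuition for \emph{why} two incomplete-gamma terms appear (real eigenvalues versus complex-conjugate pairs), which the Wishart argument obscures.
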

\begin{proof}
Put $B:=(A+tI_n)^T(A+tI_n)$. Then $B$ is said to have the \emph{noncentral Wishart distribution}; see \cite[Definition 10.3.1, p. 441]{muirhead}. We have $\lvert \det(A+tI_n)\rvert=\det(B)^\frac{1}{2}$ and the expectation of $\det(B)^\frac{1}{2}$ is given in \cite[Theorem 10.3.7, p. 447]{muirhead}. Combining this with \cite[Theorem 4.1]{real_eigenvalues} yields the claim.
\end{proof}

~
\section{Geometric framework}\label{se:geo-framework} 
As in the introduction we denote by~$\cH_d^\HR$ the vector space of real
homogeneous polynomials of degree~$d$ in the $n$ variables
$X_1,\ldots,X_n$. For a vector of non-negative integers $\alpha=(\alpha_1,\ldots,\alpha_n)$ we denote $\norm{\alpha}:=\alpha_1+\ldots+\alpha_n$. The \emph{Bombieri-Weyl basis} on~$\cH_d^\HR$ is defined as $\cE=\big\{e_\alpha:=\sqrt{\binom{d}{\alpha}} \;X^\alpha \mid \lvert \alpha \rvert = d\big\}$, where $\binom{d}{\alpha}=\frac{d!}{\alpha_1!\cdots\alpha_n!}$ is the multinomial coeffient. The Bombieri-Weyl product on $\cH_d^\HR$ is defined as~$
	\Big\langle \sum a_\alpha e_\alpha,\sum b_\alpha e_\alpha \Big\rangle := \sum\limits_\alpha a_\alpha b_\alpha.$ This product extends to $(\cH_d^\HR)^n$ as follows. Let $f=(f_1,\ldots,f_n)$ and $g=(g_1,\ldots,g_n)\in(\cH_d^\HR)^n$. We define $\langle f,g\rangle :=\sum_{i=1}^n \langle f_i,g_i\rangle.$ For $f\in(\cH_d^\HR)^n$ we set $\lVert f\rVert := \sqrt{\langle f,f\rangle}$. The orthogonal group $\cO(n)$ acts on $\cH_d^\HR$ via $U.f:=f\circ U^{-1}$. The following is \cite[Theorem 16.3]{condition}.
\begin{thm}\label{thm_unitary}
For all $f,g\in\cH_d^\HR$ and $U\in\cO(n)$ we have $\langle U.f,U.g\rangle = \langle f,g\rangle$.
\end{thm}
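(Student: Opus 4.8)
The plan is to reduce the theorem to the special case of $d$-th powers of linear forms, for which the Bombieri--Weyl product is completely explicit. For $y\in\HR^n$ put $\ell_y:=\langle y,\,\cdot\,\rangle^d\in\cH_d^\HR$, i.e.\ $\ell_y(X)=(y_1X_1+\dots+y_nX_n)^d$. First I would expand $\ell_y$ in the Bombieri--Weyl basis $\cE$: writing $y^\alpha:=y_1^{\alpha_1}\cdots y_n^{\alpha_n}$, the multinomial theorem gives $\ell_y=\sum_{\norm{\alpha}=d}\binom{d}{\alpha}y^\alpha X^\alpha=\sum_{\norm{\alpha}=d}\sqrt{\binom{d}{\alpha}}\,y^\alpha e_\alpha$. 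Since $\cE$ is orthonormal for $\langle\cdot,\cdot\rangle$, this yields
\[
\langle \ell_y,\ell_z\rangle=\sum_{\norm{\alpha}=d}\binom{d}{\alpha}(y_1z_1)^{\alpha_1}\cdots(y_nz_n)^{\alpha_n}=\langle y,z\rangle^d,
\]
again by the multinomial theorem.

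Next I would record how the action moves these generators. For $U\in\cO(n)$ we have $(U.\ell_y)(X)=\ell_y(U^{-1}X)=\langle y,U^{-1}X\rangle^d=\langle (U^{-1})^T y,X\rangle^d$, and since $U^{-1}=U^T$ this is $\langle Uy,X\rangle^d$; hence $U.\ell_y=\ell_{Uy}$. Combining this with the previous display and the fact that $U$ preserves the standard inner product on $\HR^n$, we get for all $y,z\in\HR^n$
\[
\langle U.\ell_y,\,U.\ell_z\rangle=\langle \ell_{Uy},\ell_{Uz}\rangle=\langle Uy,Uz\rangle^d=\langle y,z\rangle^d=\langle \ell_y,\ell_z\rangle .
\]
So the claimed invariance holds on the set of $d$-th powers of linear forms.

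It remains to upgrade this to all of $\cH_d^\HR$ by bilinearity, which requires that the $\ell_y$ span $\cH_d^\HR$. This is the one point needing a (standard) argument: if a linear functional $\varphi$ on $\cH_d^\HR$ vanishes on every $\ell_y$, then $0=\varphi(\ell_y)=\sum_{\norm{\alpha}=d}\sqrt{\binom{d}{\alpha}}\,\varphi(e_\alpha)\,y^\alpha$ holds identically in $y\in\HR^n$, forcing $\varphi(e_\alpha)=0$ for every $\alpha$ and hence $\varphi=0$; so the $\ell_y$ span. Writing $f=\sum_k c_k\ell_{y_k}$ and $g=\sum_m d_m\ell_{z_m}$ with $c_k,d_m\in\HR$, linearity of $U.$ gives $U.f=\sum_k c_k\ell_{Uy_k}$ and $U.g=\sum_m d_m\ell_{Uz_m}$, so expanding $\langle U.f,U.g\rangle$ bilinearly and applying the previous step term by term yields $\langle U.f,U.g\rangle=\sum_{k,m}c_kd_m\langle \ell_{y_k},\ell_{z_m}\rangle=\langle f,g\rangle$.

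I do not expect a real obstacle here: the argument is formal once one knows that $d$-th powers of linear forms span $\cH_d^\HR$, and that is classical. An essentially equivalent and slightly slicker route uses the reproducing-kernel identity $\langle f,\ell_y\rangle=f(y)$, which follows from the same expansion; then $\langle U.f,\ell_y\rangle=(U.f)(y)=f(U^{-1}y)=\langle f,\ell_{U^{-1}y}\rangle=\langle f,U^{-1}.\ell_y\rangle$, and since the $\ell_y$ span and $U\mapsto U.$ is a homomorphism into $\mathrm{GL}(\cH_d^\HR)$, this means $(U.)^{*}=(U.)^{-1}$, i.e.\ $U.$ is orthogonal.
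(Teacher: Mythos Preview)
Your argument is correct. The paper does not give its own proof of this theorem; it simply cites \cite[Theorem 16.3]{condition}, so there is nothing in the present paper to compare your approach against. For what it is worth, the route you take---computing $\langle \ell_y,\ell_z\rangle=\langle y,z\rangle^d$ and using that $d$-th powers of linear forms span $\cH_d^\HR$---is exactly the standard one, and your reproducing-kernel variant $\langle f,\ell_y\rangle=f(y)$ is the form in which this invariance is usually packaged in the literature.
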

Let $e_1:=(1,0,\ldots,0)\in\HR^n$ and consider the space $R:=\cset{f\in \cH_d^\HR}{f(e_1)=0, Df(e_1)=0}$ of polynomials that vanish at first order at $e_1$. Let $L:=R^\perp \cap \set{f(e_1)=0}$ and $C:=R^\perp \cap L^\perp$. The following is \cite[Prop. 16.16]{condition}.
\begin{prop}\label{decomp}
Denote $X:=(X_1,\ldots,X_n)$ and $X':=(X_2,\ldots,X_n)$ 
\begin{enumerate}
\item $\cH_d^\HR = C \oplus L \oplus R$ is an orthogonal decomposition 
\item $C=\cset{c X_1^d}{c\in \HR}$ and $\Norm{c X_1^d}=\norm{c}$.
\item $L=\cset{\sqrt{d} X_1^{d-1} a^T X'}{a\in\HR^{n-1}}$ and $\Norm{\sqrt{d} X_1^{d-1} a^T X'}=\norm{a}$.
\end{enumerate}
\end{prop}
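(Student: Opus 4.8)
The plan is to make the whole decomposition explicit in the Bombieri--Weyl basis $\cE=\set{e_\alpha=\sqrt{\binom{d}{\alpha}}\,X^\alpha : \norm{\alpha}=d}$, which by its very definition is orthonormal for the Bombieri--Weyl product. Once $R$ is pinned down as the span of a sub-collection of the $e_\alpha$, the subspaces $R^\perp$, $L$ and $C$ will drop out immediately, and all three assertions — including the two norm identities — reduce to elementary bookkeeping with multinomial coefficients; there is no analytic content.

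First I would analyse the two linear conditions cutting out $R$. Evaluation at $e_1=(1,0,\dots,0)$ annihilates every monomial $X^\alpha$ with $\norm{\alpha}=d$ except $X_1^d$, so $f(e_1)$ equals the coordinate of $f$ along $e_{(d,0,\dots,0)}=X_1^d$ (note $\binom{d}{(d,0,\dots,0)}=1$). For the gradient, $\partial_{X_1}X^\alpha$ is nonzero at $e_1$ only for $\alpha=(d,0,\dots,0)$, which just reproduces the condition $f(e_1)=0$; and for $j\ge 2$, writing $\beta^{(j)}$ for the multi-index with $d-1$ in slot $1$, $1$ in slot $j$ and $0$ elsewhere, $\partial_{X_j}X^\alpha$ is nonzero at $e_1$ exactly when $\alpha=\beta^{(j)}$, so the $j$-th component of $Df(e_1)=0$ says the coordinate of $f$ along $e_{\beta^{(j)}}=\sqrt{d}\,X_1^{d-1}X_j$ vanishes (note $\binom{d}{\beta^{(j)}}=d$). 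Hence $f(e_1)=0,\ Df(e_1)=0$ amount precisely to $f$ being orthogonal to $X_1^d$ and to each of $\sqrt{d}\,X_1^{d-1}X_2,\dots,\sqrt{d}\,X_1^{d-1}X_n$, so $R$ is the span of the remaining basis vectors and
\[
R^\perp=\mathrm{span}\set{X_1^d,\ \sqrt{d}\,X_1^{d-1}X_2,\ \dots,\ \sqrt{d}\,X_1^{d-1}X_n}
\]
is an $n$-dimensional space whose displayed spanning set is orthonormal, being a sub-collection of $\cE$.

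From here the rest is immediate. Since $\set{f(e_1)=0}$ is exactly the hyperplane on which the $X_1^d$-coordinate vanishes, intersecting $R^\perp$ with it deletes the vector $X_1^d$ and leaves $L=\mathrm{span}\set{\sqrt{d}\,X_1^{d-1}X_j:j=2,\dots,n}$; writing a general element as $\sum_{j=2}^n a_{j-1}\sqrt{d}\,X_1^{d-1}X_j=\sqrt{d}\,X_1^{d-1}a^TX'$ with $a\in\HR^{n-1}$ gives assertion (3), and orthonormality of the $e_{\beta^{(j)}}$ yields $\Norm{\sqrt{d}\,X_1^{d-1}a^TX'}=\Norm{a}$. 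Inside $R^\perp$ the orthogonal complement of $L$ is the line $\HR\, X_1^d$, which is assertion (2), with $\Norm{cX_1^d}=\norm{c}$ because $X_1^d\in\cE$. Finally, $\cH_d^\HR=R\oplus R^\perp$ orthogonally, while $R^\perp=L\oplus C$ orthogonally since $C$ was defined as $R^\perp\cap L^\perp$; combining these two splittings gives the orthogonal decomposition (1). The only point demanding a little care is the one carried out in the middle paragraph: correctly matching the first-order Taylor data of $f$ at $e_1$ with the $\cE$-coordinates of $f$, and checking that the relevant multinomial coefficients evaluate to $1$ and $d$, so that $X_1^d$ and $\sqrt{d}\,X_1^{d-1}X_j$ come out as honest unit vectors in the Bombieri--Weyl product.
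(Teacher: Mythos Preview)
Your proof is correct: identifying $R$ as the span of those $e_\alpha$ with $\alpha_1\le d-2$ by reading off $f(e_1)$ and $Df(e_1)$ in the orthonormal basis $\cE$, and then splitting $R^\perp$ into the line $\HR\,X_1^d$ and the $(n-1)$-plane spanned by the $\sqrt{d}\,X_1^{d-1}X_j$, is exactly the right computation. The paper does not supply its own proof here but simply quotes the result from \cite[Prop.~16.16]{condition}; your argument is the standard direct verification one would give, so there is nothing to compare beyond noting that you have filled in what the paper leaves to a citation.
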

\subsection{The solution manifold}
Let $\cA:=\HR[X_1,\ldots,X_n,\Lambda]$ be the polynomial ring in the variables $X_1,\ldots,X_n,\Lambda$ and define $F: (\cH_d^\HR)^n \to \cA^n,f\mapsto f(X)-\Lambda\,  X.$
For $f\in (\cH_d^\HR)^n$ we set $F_f:=F(f)$, such that 
\begin{equation}\label{dfn_F_A}
F_f:\HR^n\times \HR \to \HR^n,\quad (v,\lambda) \mapsto F_f(v,\lambda)= f(v)-\lambda v.
\end{equation}
We denote by $\partial_X$ and $\partial_\Lambda$ the partial derivatives with respect to $X=(X_1,\ldots,X_n)$ and $\Lambda$, respectively. Let $I_{n}$ denote the $n\times n$-identity matrix. Then the derivative of $F_f$ at $(v,\lambda)$ has the following matrix representation:
\begin{equation}
	\begin{bmatrix} \partial_X f -\partial_X (\Lambda \, X), &  -\partial_\Lambda(\Lambda\, X)\end{bmatrix}_{(X,\Lambda)=(v,\lambda)}
	= \begin{bmatrix} \partial_X f(v) -\lambda I_{n}, &  -v\end{bmatrix}\label{jacobian},
\end{equation}  
As in \cite[Sec. 3]{distr} we define 
\begin{align*}\label{sol_variety}
	\hV &:=\cset{(f,v,\lambda)\in(\cH_d^\HR)^n\times \HS(\HR^n)\times\HR}{f(v)=\lambda v}, \quad 
	\hW :=\cset{(f,v,\lambda)\in \hV}{\rank DF_f(v,\lambda)=n},
\end{align*}
the \emph{real solution manifold} and its subset, the \emph{manifold of real well-posed triples}.  Moreover, we define the projections 
\begin{equation}
\label{proj}\pi_1\colon\hV\to (\cH_d^\HR)^n,\; (f,v,\lambda)\mapsto f,\quad \pi_2\colon\hV\to \HS(\HR^n)\times \HR,\; (f,v,\lambda)\mapsto (v,\lambda).
\end{equation}
The orthogonal group $\cO(n)$ acts on 
$\hV$ via  
\begin{equation}\label{groupaction}
U.(f,v,\lambda) := (U\circ f\circ U^{-1}, Uv, \lambda),\quad U\in\cO(n).
\end{equation}
Note that $\hW$ is invariant under this group action and that $\cO(n)$ acts by isometries by \cref{thm_unitary}. 
\begin{lemma}\label{tangent_space}
For $v\in \HR^n$ we denote $\mathrm{eval}_v :(\cH_d^\HR)^n\to \HR^n, f\mapsto f(v)$. 
We have that $\hV$ is a Riemannian manifold of dimension $\dim (\cH_d^\HR)^n$. The tangent space of $\hV$ at~$(f,v,\lambda)\in\hW$ is given by 
	\[\cset{(\dot{f},\dot{v},\dot{\lambda})\in (\cH_d^\HR)^n\times T_v\times \HR}{ (\dot{v},\dot{\lambda}) 
 = - DF_f(v,\lambda)|_{T_v\times \HR}^{-1}\; \mathrm{eval}_v(\dot{f})}.\]
\end{lemma}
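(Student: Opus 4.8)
The plan is to verify the two assertions of \cref{tangent_space} separately: first the dimension of $\hV$, then the description of the tangent space at a well-posed triple. Both will follow from the implicit function theorem applied to the map $F$, exploiting the fact that $\hV$ is defined by the vanishing of $F_f(v,\lambda)=f(v)-\lambda v$ together with the constraint $\lVert v\rVert=1$.

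First I would set up the ambient manifold $M:=(\cH_d^\HR)^n\times\HS(\HR^n)\times\HR$, which is a smooth Riemannian manifold of dimension $\dim(\cH_d^\HR)^n + (n-1) + 1 = \dim(\cH_d^\HR)^n + n$. Consider the smooth map $G:M\to\HR^n$ defined by $G(f,v,\lambda)=f(v)-\lambda v = F_f(v,\lambda)$, so that $\hV=G^{-1}(0)$. To show $\hV$ is a manifold of the claimed dimension it suffices to show $0$ is a regular value of $G$, i.e.\ that $DG(f,v,\lambda)$ is surjective onto $\HR^n$ at every point of $\hV$; then $\dim\hV = \dim M - n = \dim(\cH_d^\HR)^n$. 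For surjectivity I would differentiate in the $f$-direction alone: the partial derivative of $G$ with respect to $f$ at $(f,v,\lambda)$ is $\dot f\mapsto \mathrm{eval}_v(\dot f)=\dot f(v)$, and this map $(\cH_d^\HR)^n\to\HR^n$ is already surjective whenever $v\neq 0$ (which holds on $\HS(\HR^n)$), since one can prescribe the value of each coordinate polynomial at $v$ independently — concretely, using the decomposition $\cH_d^\HR = C\oplus L\oplus R$ of \cref{decomp} after an orthogonal change of coordinates sending $v$ to $e_1$, the component $cX_1^d$ evaluates to $c$ at $e_1$. Hence $DG$ is surjective and $\hV$ is a smooth manifold of dimension $\dim(\cH_d^\HR)^n$; it inherits a Riemannian metric as a submanifold of $M$.

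Next, for the tangent space description at a well-posed triple $(f,v,\lambda)\in\hW$, I would simply compute $\ker DG(f,v,\lambda)$ inside $T_{(f,v,\lambda)}M = (\cH_d^\HR)^n\times T_v\times\HR$. Writing $DG(\dot f,\dot v,\dot\lambda) = \mathrm{eval}_v(\dot f) + DF_f(v,\lambda)(\dot v,\dot\lambda)$ — where $DF_f(v,\lambda)$ is the Jacobian in \cref{jacobian}, restricted to the tangent directions $(\dot v,\dot\lambda)\in T_v\times\HR$ — the kernel condition reads $\mathrm{eval}_v(\dot f) + DF_f(v,\lambda)|_{T_v\times\HR}(\dot v,\dot\lambda) = 0$. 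The definition of $\hW$ is precisely that $\rank DF_f(v,\lambda)=n$; since the $n\times(n+1)$ matrix $\begin{bmatrix}\partial_X f(v)-\lambda I_n, & -v\end{bmatrix}$ has full row rank $n$ and its restriction to the $n$-dimensional space $T_v\times\HR$ loses at most one dimension, I would note that this restriction is in fact bijective onto $\HR^n$: its kernel is $(T_v\times\HR)\cap\ker DF_f(v,\lambda)$, and $\ker DF_f(v,\lambda)$ is one-dimensional, spanned by $(v,\lambda) + $ (something) — more carefully, the full kernel of the $n\times(n+1)$ matrix is a line, and a dimension count shows that either this line lies in $T_v\times\HR$ or it does not; the well-posedness together with the standard fact (cf.\ \cite[Sec. 3]{distr}) that for eigenpairs the relevant restricted derivative is invertible handles this, so $DF_f(v,\lambda)|_{T_v\times\HR}$ is invertible. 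Then the kernel condition can be solved uniquely for $(\dot v,\dot\lambda)$, giving $(\dot v,\dot\lambda) = -DF_f(v,\lambda)|_{T_v\times\HR}^{-1}\,\mathrm{eval}_v(\dot f)$, which is exactly the asserted description.

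The main obstacle I anticipate is the last point: carefully justifying that the restriction $DF_f(v,\lambda)|_{T_v\times\HR}$ is invertible for $(f,v,\lambda)\in\hW$, rather than merely that $DF_f(v,\lambda)$ has rank $n$ as a map on all of $\HR^n\times\HR$. The subtlety is that restricting an $n\times(n+1)$ surjection to an $n$-dimensional subspace can fail to be surjective exactly when the one-dimensional kernel happens to be disjoint from that subspace, so one must rule out the opposite bad case — namely that the kernel line is contained in $T_v\times\HR$, which would make the restriction rank $n-1$. I would resolve this by observing that the kernel of $\begin{bmatrix}\partial_X f(v)-\lambda I_n, & -v\end{bmatrix}$ restricted to $T_v\times\HR$ being nonzero would force a vector $(\dot v,\dot\lambda)$ with $\dot v\perp v$ and $(\partial_X f(v)-\lambda I_n)\dot v = \dot\lambda v$; combined with Euler's relation $\partial_X f(v)\,v = d\,f(v) = d\lambda v$ (so $v$ is itself an eigenvector-like direction of $\partial_X f(v)$), one derives a relation forcing a rank drop of the full matrix, contradicting $(f,v,\lambda)\in\hW$. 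This is the one spot where the specific structure of the eigenpair equation, as opposed to a generic system, actually enters; everything else is a routine application of the regular value / implicit function theorem.
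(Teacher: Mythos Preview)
Your proposal is correct and follows essentially the same route as the paper: define $G(f,v,\lambda)=f(v)-\lambda v$ on $(\cH_d^\HR)^n\times\HS(\HR^n)\times\HR$, show it is a submersion via surjectivity of $\mathrm{eval}_v$, read off the tangent space as $\ker DG$, and then argue that $DF_f(v,\lambda)|_{T_v\times\HR}$ is invertible on $\hW$. The only place where the paper is tighter is the last step: rather than arguing by contradiction, it simply uses Euler's identity (together with $f(v)=\lambda v$) to compute $\ker DF_f(v,\lambda)=\HR\cdot(v,(d-1)\lambda)$ directly, and since $v\notin T_v$ this line meets $T_v\times\HR$ only at the origin---which immediately gives the invertibility and would let you replace your somewhat tentative discussion of the ``main obstacle'' with a one-line computation.
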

\begin{proof}
The set $\hV$ is the zero set of the $\cC^1$-function $G(f,v,\lambda)=F_f(v,\lambda)=f(v)-\lambda v$. The derivative of~$G$ at $(f,v,\lambda)$ maps $(\dot{f},\dot{v},\dot{\lambda}) \mapsto \dot{f}(v)+DF_f(v,\lambda)(\dot{v},\dot{\lambda})$. Setting $(\dot{v},\dot{\lambda})=0$ we see that the image of~$DG(f,v,\lambda)$ contains $\cset{\dot{f}(v)}{\dot{f}\in(\cH_d^\HR)^n}=\HR^n$, which shows that $G$ is a submersion. These two facts combined with \cite[Theorem A.9]{condition} imply that $\hV$ is a Riemannian manifold, whose tangent space at $(f,v,\lambda)$ is $\cset{(\dot{f},\dot{v},\dot{\lambda})\in(\cH_d^\HR)^n\times T_v\times \HR}{ \dot{f}(v) 
       + DF_f(v,\lambda)(\dot{v},\dot{\lambda})=0}$. Moreover, the dimension of $\hV$ is given by $\dim \hV = \dim \, (\cH_d^\HR)^n \times \HS(\HR^n)\times \HR - \dim \HR^n = \dim \, (\cH_d^\HR)^n$. Clearly, $\dim \hV= \dim \hW$. Let $(f,v,\lambda)\in\hW$, i.e. $\rank DF_f(v,\lambda)=n$. From \cref{jacobian} and Euler's identity for homogeneous functions it follows that for $(f,v,\lambda)\in\hW$ we have $\ker DF_f(v,\lambda)=(v,(d-1)\lambda)^T\,\HR$. In particular, $DF_f(v,\lambda)|_{T_v\times \HR}$ is invertible, so $T_{(f,v,\lambda)}\hV$ can be written in the desired form.
\end{proof}
\section{Proof of the main theorem}\label{sec:proof}
As in the introduction we denote by $\#_\HR(f)$ the number of real eigenvalues of $f$. Recall from \cref{1.2} that 
\[E_{n,d}=\mean\limits_{f\sim N((\cH_d^\HR)^n)} \,\#_\HR(f).\]
Moreover, recall that $\Gamma(n,x)$ and $\gamma(n,x)$ are the upper and lower incomplete gamma functions, see \cref{gamma_fct}. For $\lambda\in\HR$ we put $F_{0,d}(\lambda)=1$ and define
	\begin{equation}\label{F_nd}
	F_{n,d}(\lambda):= \frac{\sqrt{d}^{\,n}}{\Gamma(n)} 
         \left[ e^{\lambda^2/(2d)}\;\Gamma\left(n,\frac{\lambda^2}{d}\right) + 2^{n-1}\left(\frac{\lambda^2}{2d}\right)^\frac{n}{2} \gamma\left(\frac{n}{2},\frac{\lambda^2}{2d}\right)\right],\; n\geq 1.
	\end{equation}
The main step on the way to prove \cref{main_thm} is the following proposition.
\begin{prop}\label{main_step}
We have $\mean\limits_{f\sim N((\cH_d^\HR)^n)} \,\#_\HR(f) = \mean\limits_{\lambda\sim N(0,1)} F_{n-1,d}(\lambda)$. 
\end{prop}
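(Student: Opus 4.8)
The plan is to apply the coarea/integral formula \cref{integration_formula} to the real solution manifold $\hV$ with $M=(\cH_d^\HR)^n$, $N=\HS(\HR^n)\times\HR$, and the projections $\pi_1,\pi_2$ from \cref{proj}. The left-hand side of \cref{integration_formula}, integrated against the Gaussian density $\varphi_M$ on $M$, computes exactly $\mean_{f\sim N((\cH_d^\HR)^n)}\#_\HR(f)$, since for generic $f$ the fiber $\pi_1^{-1}(f)$ consists of one point per real eigenpair (each eigenpair $(v,\lambda)$ appears together with $(-v,(-1)^{d-1}\lambda)$ on the unit sphere, but this is the standard double-count that matches the definition of $\#_\HR$; I would check the normalization carefully here, comparing with \cite[Sec. 3]{distr}). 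So I get
\[
E_{n,d}=\int_{(v,\lambda)\in\HS(\HR^n)\times\HR}\Big[\int_{f\in\pi_2^{-1}(v,\lambda)}\varphi_M(f)\,\sqrt{\det\big(DS_{(f,v,\lambda)}(f)DS_{(f,v,\lambda)}(f)^T\big)}\;\d\pi_2^{-1}(v,\lambda)\Big]\d N.
\]

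Next I would exploit the $\cO(n)$-symmetry from \cref{groupaction} and \cref{thm_unitary}: the action is by isometries on all three spaces and is transitive on the sphere, and the Gaussian density is invariant, so the inner bracket is independent of $v$. Fixing $v=e_1$ reduces the outer integral to $\mathrm{vol}(\HS(\HR^n))\int_{\lambda\in\HR}(\cdots)\d\lambda$. Now I invoke the orthogonal decomposition $\cH_d^\HR=C\oplus L\oplus R$ of \cref{decomp}: the fiber condition $f(e_1)=\lambda e_1$ fixes the $C$-component ($cX_1^d$ with $c=\lambda$) and forces the $L$-component corresponding to the first coordinate $f_1$ to vanish in its $X'$-part while the other coordinates' $C\oplus L$ parts are pinned by $f_i(e_1)=0$ for $i\ge 2$ — so $\pi_2^{-1}(e_1,\lambda)$ is an affine translate of $\bigoplus R$ (times the free $L$-parts of $f_2,\dots,f_n$), and the Gaussian on it splits as a fixed factor $\varphi(\lambda e_1)\propto e^{-\lambda^2/2}$ times a standard Gaussian on the remaining coordinates. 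The key computation is identifying the Jacobian factor $\sqrt{\det(DS\,DS^T)}$: using \cref{tangent_space}, $DS_{(f,e_1,\lambda)}(f)=-DF_f(e_1,\lambda)|_{T_{e_1}\times\HR}^{-1}\,\mathrm{eval}_{e_1}$, so the normal Jacobian of $\pi_1$ is governed by $|\det DF_f(e_1,\lambda)|_{T_{e_1}\times\HR}|$, and by \cref{jacobian} this matrix is $[\,\partial_X f(e_1)-\lambda I_n\,\big|\,-e_1\,]$ restricted appropriately. A short linear-algebra reduction (deleting the first row/column against the $-e_1$ column) should turn $|\det DF_f(e_1,\lambda)|_{T_{e_1}\times\HR}|$ into $|\det(M-\tfrac{\lambda}{d}\cdot(\text{something}))|$ for an $(n-1)\times(n-1)$ block $M$ built from the relevant coefficients of $f_2,\dots,f_n$; crucially, by \cref{decomp}(3) the surviving free coefficients feed into $M$ as entries with the right variances so that, after rescaling by $\sqrt{d}$, $M$ becomes a standard Gaussian $(n-1)\times(n-1)$ matrix and the shift becomes $\tfrac{\lambda}{\sqrt d}I_{n-1}$ up to the homogeneity factor.

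At that point the inner integral over the fiber becomes (a constant times) $\mean_{A\sim N(\HR^{(n-1)\times(n-1)})}|\det(A+\tfrac{\lambda}{\sqrt d}I_{n-1})|$, which \cref{expectation_det} evaluates in closed form; matching constants should produce exactly $F_{n-1,d}(\lambda)$ as defined in \cref{F_nd}, with the leftover $e^{-\lambda^2/2}$ Gaussian weight on $\lambda$ accounting for the $\mean_{\lambda\sim N(0,1)}$ in the statement (and $\mathrm{vol}(\HS(\HR^n))$ cancelling against normalization constants from the Bombieri-Weyl Gaussian). The main obstacle I anticipate is the bookkeeping in the Jacobian step: correctly tracking which coefficients of $f$ remain free on the fiber, verifying via the Bombieri-Weyl normalization in \cref{decomp} that they are i.i.d.\ standard Gaussian after the $\sqrt d$-rescaling, and confirming that the homogeneity/Euler-identity contribution from the $-e_1$ column (cf.\ the kernel computation $\ker DF_f=(v,(d-1)\lambda)^T\HR$ in \cref{tangent_space}) combines with the $\partial_X f(e_1)$ block to give precisely the shift $\tfrac{\lambda}{\sqrt d}I_{n-1}$ and not some other multiple of $\lambda$. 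I would also need to check the finiteness/regularity hypotheses of \cref{integration_formula} (that $\Sigma=\pi_1(\Sigma')$ is negligible and the fibers are a.s.\ finite), which follows from the fact that $D(n,d)$ is the generic number of complex eigenpairs together with genericity of Gaussian $f$.
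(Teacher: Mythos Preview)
Your approach is exactly the paper's: apply the coarea formula \cref{integration_formula} on $\hV$, use the $\cO(n)$-action to reduce to $v=e_1$, split the fiber via \cref{decomp}, and then invoke \cref{expectation_det} to produce $F_{n-1,d}(\lambda)$. One correction to your fiber bookkeeping (precisely the obstacle you flagged): the condition $f(e_1)=\lambda e_1$ pins only the $C$-components---all $L$- and $R$-parts remain free, since $L$ and $R$ vanish at $e_1$ by construction---so the $L$-part of $f_1$ is \emph{not} forced to vanish but is simply integrated out, as the determinant $\det DF_f(e_1,\lambda)|_{T_{e_1}\times\HR}=-\det(\sqrt d\,A-\lambda I_{n-1})$ depends only on the $(n-1)\times(n-1)$ block $A$ coming from the $L$-parts of $f_2,\dots,f_n$.
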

For $n=1$ we have $\#_\HR(f)=1$, so in this case \cref{main_step} is immediate. For the case $n>1$ a proof follows from combining \cref{lemma1} and \cref{shape_E} below. 

For the rest of this subsection let $n>1$. Recall from \cref{proj} the definition of $\pi_1:\hV\to (\cH_d^\HR)^n$ and from the introduction the definition of $D(n,d):=\sum_{i=0}^{n-1}d^{i}$. For $f\in(\cH_d^\HR)^n$ we have $\#_\HR(f)=\frac{1}{2} \lvert \pi_1^{-1}(f)\rvert$ and by \cite[Cor. 1]{qi1} and \cite[Theorem 1.2]{sturmfels-cartwright} we have~$\#_\HR(f) \leq D(n,d)$. In particular, if $f$ is a regular value of $\pi_1$, the fiber~$\pi_1^{-1}(f)$ is finite and \[\int_{(\cH_d^\HR)^n}  \lvert\pi_1^{-1}(f)\rvert\; \d (\cH_d^\HR)^n \leq  \int_{(\cH_d^\HR)^n}  2D(n,d) \;\d (\cH_d^\HR)^n = 2D(n,d) <\infty.\]
Let $\mu$ denote the Lebesgue measure on $(\cH_d^\HR)^n$ and put $M=(\cH_d^\HR)^n$, $\d M= \varphi_{(\cH_d^\HR)^n}(f)\, \d \mu$, $N=\HS(\HR^n)\times \HR$, $V=\hV$ and $\Sigma' = \hV\backslash\hW$. We are now in the situation of the integral formula \cref{integration_formula}.  As in \cref{integration_formula_sec} we denote by~$S_{(f,v,\lambda)}$ the (locally defined) solution map. \cref{tangent_space} shows that~$DS_{(f,v,\lambda)}(\dot{f})=\restr{DF_f(v,\lambda)}{T_v\times\HR}^{-1}\, \circ \,\mathrm{eval}_v$. It is elementary to prove $\mathrm{eval}_v\mathrm{eval}_v^T = \id_{\HR^n}$. Hence, by the integral formula we have that $\mean\limits_{f\sim N((\cH_d^\HR)^n)} \,\#_\HR(f)$ equals
\begin{equation}\label{eq1}
 \frac{1}{2} \int_{(v,\lambda)\in\HS(\HR^n)\times \HR} \left(\;\int_{(f,v,\lambda)\in\pi_2^{-1}(v,\lambda)} \left\lvert \det DF_f(v,\lambda)|_{T_{v}\times\HR} \right\rvert  \,\varphi(f) \,\d \pi_2^{-1}(v,\lambda)\right) \,\d (\HS(\HR^n)\times \HR);
\end{equation}
Let us put
	\begin{equation}\label{dfn_E}
	J(v,\lambda):=\int_{(f,v,\lambda)\in\pi_2^{-1}(v,\lambda)} \left\lvert\det DF_f(v,\lambda)|_{T_{v}\times\HR} \right\rvert \,\varphi(f) \,\d \pi_2^{-1}(v,\lambda).
	\end{equation}
Using the group action of $\cO(n)$ on $\hV$ from \cref{groupaction} we obtain $J(v,\lambda)=J(e_1,\lambda)$, where $e_1:=(1,0,\ldots,0)$.
\begin{lemma}\label{lemma1}
Let $(v,\lambda)\in \HS(\HR^n)\times\HR$. We have $\mean\limits_{f\sim N((\cH_d^\HR)^n)} \,\#_\HR(f)  =\frac{\sqrt{\pi}^{\,n}}{\Gamma(\frac{n}{2})} \int_{\lambda \in \HR} J(e_1,\lambda)\; \d \HR.$
\end{lemma}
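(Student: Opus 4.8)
Starting from the expression \cref{eq1} for $\mean_{f} \#_\HR(f)$, the integrand over $\HS(\HR^n)\times\HR$ is exactly the quantity $J(v,\lambda)$ defined in \cref{dfn_E}, so that
\[
\mean\limits_{f\sim N((\cH_d^\HR)^n)} \,\#_\HR(f) \;=\; \frac{1}{2}\int_{(v,\lambda)\in\HS(\HR^n)\times\HR} J(v,\lambda)\;\d(\HS(\HR^n)\times\HR).
\]
The key observation, already recorded just before the statement, is that the $\cO(n)$-action \cref{groupaction} is by isometries (by \cref{thm_unitary}) and preserves the Gaussian density $\varphi$ (since an orthogonal change of variables preserves the Bombieri--Weyl inner product and hence $\Norm{f}$), and it preserves the fibration $\pi_2$; therefore $J(v,\lambda)=J(Uv,\lambda)$ for every $U\in\cO(n)$, and since $\cO(n)$ acts transitively on $\HS(\HR^n)$ we get $J(v,\lambda)=J(e_1,\lambda)$ for all unit $v$. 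The plan is then simply to integrate out the sphere variable.

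\medskip
\textbf{Carrying it out.} Because $J(v,\lambda)$ depends on $v$ only through its being a unit vector, Fubini over the product $\HS(\HR^n)\times\HR$ splits the integral as
\[
\frac{1}{2}\int_{\lambda\in\HR}\left(\int_{v\in\HS(\HR^n)} J(e_1,\lambda)\;\d\HS(\HR^n)\right)\d\HR
\;=\;\frac{1}{2}\,\mathrm{vol}\big(\HS(\HR^n)\big)\int_{\lambda\in\HR} J(e_1,\lambda)\;\d\HR.
\]
Now substitute the standard value of the surface area of the unit sphere in $\HR^n$, namely $\mathrm{vol}(\HS(\HR^n)) = \frac{2\sqrt{\pi}^{\,n}}{\Gamma(n/2)}$. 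The prefactor becomes $\frac{1}{2}\cdot\frac{2\sqrt{\pi}^{\,n}}{\Gamma(n/2)} = \frac{\sqrt{\pi}^{\,n}}{\Gamma(n/2)}$, which is exactly the claimed constant, yielding
\[
\mean\limits_{f\sim N((\cH_d^\HR)^n)} \,\#_\HR(f) \;=\; \frac{\sqrt{\pi}^{\,n}}{\Gamma(\tfrac{n}{2})}\int_{\lambda\in\HR} J(e_1,\lambda)\;\d\HR,
\]
as desired. One should note, for rigor, that the coarea/integral formula \cref{integration_formula} was already applied to obtain \cref{eq1}, and that the finiteness bound $\int \lvert\pi_1^{-1}(f)\rvert\,\d M\le 2D(n,d)<\infty$ established above justifies all the interchanges of integration.

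\medskip
\textbf{Main obstacle.} There is very little genuine difficulty here; the statement is essentially a bookkeeping step. The only points requiring a word of care are (i) verifying that the Gaussian density $\varphi$ on $(\cH_d^\HR)^n$ is genuinely $\cO(n)$-invariant — this is immediate from \cref{thm_unitary} since $\varphi(f)$ depends only on $\Norm{f}$ — and (ii) confirming that the measure $\d\pi_2^{-1}(v,\lambda)$ on the fibers of $\pi_2$ transforms correctly under the isometric group action so that the full integrand $\lvert\det DF_f(v,\lambda)|_{T_v\times\HR}\rvert\,\varphi(f)$, integrated over $\pi_2^{-1}(v,\lambda)$, really is an $\cO(n)$-invariant function of $(v,\lambda)$; this follows because $U$ maps $\pi_2^{-1}(v,\lambda)$ isometrically onto $\pi_2^{-1}(Uv,\lambda)$ and the determinant factor is intertwined correctly by the chain rule applied to \cref{jacobian}. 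Once these are in hand, the computation of the sphere volume finishes the proof.
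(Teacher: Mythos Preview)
Your proof is correct and follows essentially the same route as the paper's: both start from \cref{eq1}--\cref{dfn_E}, invoke the $\cO(n)$-invariance $J(v,\lambda)=J(e_1,\lambda)$ (which the paper records immediately before the lemma), integrate out the sphere variable, and plug in $\mathrm{vol}\,\HS(\HR^n)=\tfrac{2\sqrt{\pi}^{\,n}}{\Gamma(n/2)}$. Your additional remarks on why the invariance holds are sound but not needed beyond what the paper already establishes.
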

\begin{proof}
From \cref{eq1} and \cref{dfn_E} we have $\mean\limits_{f\sim N((\cH_d^\HR)^n)} \,\#_\HR(f)  =\frac{1}{2} \int_{(v,\lambda)\in \HS(\HR^n)\times \HR} \;J(v,\lambda) \,\d (\HS(\HR^n)\times\HR)$. In this integral we may replace $J(v,\lambda)$ by $J(e_1,\lambda)$. Since $J(e_1,\lambda)$ is independent of $v$, we can integrate over $v$ to obtain
	\[\mean\limits_{f\sim N((\cH_d^\HR)^n)} \,\#_\HR(f)  = \frac{\mathrm{vol}\,\HS(\HR^n)}{2} \int_{\lambda \in \HR} J(e_1,\lambda)\; \d \HR.\]
The claim follows from the fact that $\mathrm{vol}\,\HS(\HR^n) =\frac{2\sqrt{\pi}^{\,n}}{\Gamma(\frac{n}{2})}$.
\end{proof}
\begin{prop} \label{shape_E}
For any $\lambda\in\HR$ we have
\[J(e_1,\lambda)=\frac{\sqrt{d}^{\,n-1}\, \Gamma(\frac{n}{2})}{\sqrt{\pi}^{\,n}} 
         \left( e^{\lambda^2/(2d)}\; \frac{\Gamma\left(n-1,\frac{\lambda^2}{d}\right)}{\Gamma(n-1)} + 2^{n-2}\left(\frac{\lambda^2}{2d}\right)^\frac{n-1}{2} \frac{\gamma\left(\frac{n-1}{2},\frac{\lambda^2}{2d}\right)}{\Gamma(n-1)}\right)\;\varphi(\lambda),\]
where $\varphi$ denotes the density function of the standard normal distribution.
\end{prop}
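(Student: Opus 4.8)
The plan is to reduce $J(e_1,\lambda)$, as given in \cref{dfn_E}, to an expected absolute determinant of a gaussian matrix and then invoke \cref{expectation_det}. There are three steps: evaluating the Jacobian determinant in the integrand, parametrizing the fiber $\pi_2^{-1}(e_1,\lambda)$ so that the gaussian density factors, and bookkeeping the constants.

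\emph{Step 1: the Jacobian determinant.} By \cref{jacobian}, the operator $DF_f(e_1,\lambda)|_{T_{e_1}\times\HR}\colon T_{e_1}\times\HR\to\HR^n$ has, in the basis $e_2,\dots,e_n$ of $T_{e_1}=\{0\}\times\HR^{n-1}$ together with the standard basis of $\HR^n$, the $n\times n$ matrix whose first $n-1$ columns are the columns $2,\dots,n$ of $\partial_X f(e_1)-\lambda I_n$ and whose last column is $-e_1=(-1,0,\dots,0)^T$. Expanding the determinant along this last column, which has a single nonzero entry, gives $\bigl|\det DF_f(e_1,\lambda)|_{T_{e_1}\times\HR}\bigr|=\bigl|\det(\widetilde A-\lambda I_{n-1})\bigr|$, where $\widetilde A:=\bigl(\partial_{X_j}f_i(e_1)\bigr)_{2\le i,j\le n}$ is the lower-right $(n-1)\times(n-1)$ block of the Jacobian of $f$ at $e_1$. (Euler's identity controls the deleted first column but plays no role here.)

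\emph{Step 2: parametrizing the fiber.} Write each component of $f$ as $f_i=c_iX_1^d+\sqrt d\,X_1^{d-1}a_i^TX'+r_i$ according to the orthogonal decomposition $\cH_d^\HR=C\oplus L\oplus R$ of \cref{decomp}, with $c_i\in\HR$, $a_i\in\HR^{n-1}$, $r_i\in R$. The equation $f(e_1)=\lambda e_1$ defining $\pi_2^{-1}(e_1,\lambda)$ is precisely $c_1=\lambda$ and $c_2=\dots=c_n=0$, while the $a_i$ and $r_i$ stay free; so $(f,e_1,\lambda)\mapsto f$ identifies the fiber isometrically with the affine slice $\{c_1=\lambda,\ c_2=\dots=c_n=0\}$ of $(\cH_d^\HR)^n$, which by \cref{decomp}(2)--(3) carries the standard Euclidean metric in the coordinates $(a_i,r_i)$. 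Consequently $\d\pi_2^{-1}(e_1,\lambda)$ is the product of Lebesgue measure on $(\HR^{n-1})^n$ with the volume measure on $R^n$, and the restriction of $\varphi(f)=\varphi_{(\cH_d^\HR)^n}(f)$ to the fiber equals $\tfrac{1}{\sqrt{2\pi}^{\,n-1}}\,\varphi(\lambda)\cdot\prod_{i=1}^n\varphi_{\HR^{n-1}}(a_i)\,\varphi_R(r_i)$, since $\lVert f\rVert^2=\lambda^2+\sum_i(\lVert a_i\rVert^2+\lVert r_i\rVert^2)$ on the fiber; in particular the $a_i$ are i.i.d.\ standard gaussian on the fiber. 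Because $r_i\in R$ satisfies $Dr_i(e_1)=0$ and $\partial_{X_j}(c_iX_1^d)=0$ for $j\ge2$, \cref{decomp}(3) gives $\partial_{X_j}f_i(e_1)=\sqrt d\,(a_i)_{j-1}$, so $\widetilde A=\sqrt d\,B$ where $B$ is the $(n-1)\times(n-1)$ matrix with rows $a_2,\dots,a_n$, hence a matrix with i.i.d.\ standard gaussian entries. As the integrand depends only on $B$, integrating out $a_1,r_1,\dots,r_n$ (all probability densities) leaves
\[J(e_1,\lambda)=\frac{\varphi(\lambda)}{\sqrt{2\pi}^{\,n-1}}\;\mean_{B\sim N(\HR^{(n-1)\times(n-1)})}\bigl|\det(\sqrt d\,B-\lambda I_{n-1})\bigr|.\]

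\emph{Step 3: conclusion.} Pulling $\sqrt d^{\,n-1}$ out of the determinant and using that $-B$ has the same distribution as $B$, one has $\mean\bigl|\det(\sqrt d\,B-\lambda I_{n-1})\bigr|=\sqrt d^{\,n-1}\,\mean\bigl|\det(B+\tfrac{\lambda}{\sqrt d}I_{n-1})\bigr|$, to which \cref{expectation_det} applies with $n$ replaced by $n-1$ and $t=\lambda/\sqrt d$, so $t^2=\lambda^2/d$. Substituting and simplifying the constant by $\tfrac{1}{\sqrt{2\pi}^{\,n-1}}\cdot\sqrt d^{\,n-1}\cdot\tfrac{\sqrt 2^{\,n-1}}{\sqrt\pi}=\tfrac{\sqrt d^{\,n-1}}{\sqrt\pi^{\,n}}$ yields exactly the asserted formula. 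I expect the only genuinely delicate point to be Step 2: confirming that the Riemannian fiber measure is product Lebesgue in the $(a_i,r_i)$-coordinates and that $\varphi(f)$ splits off precisely the factor $\tfrac{1}{\sqrt{2\pi}^{\,n-1}}\varphi(\lambda)$; Steps 1 and 3 are cofactor bookkeeping and a direct appeal to \cref{expectation_det}.
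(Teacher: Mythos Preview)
Your proof is correct and follows essentially the same route as the paper: both use the orthogonal decomposition $\cH_d^\HR=C\oplus L\oplus R$ from \cref{decomp} to parametrize the fiber and factor the gaussian density, compute $\det DF_f(e_1,\lambda)|_{T_{e_1}\times\HR}$ as $\pm\det(\sqrt{d}A-\lambda I_{n-1})$ for a standard gaussian $(n-1)\times(n-1)$ matrix $A$, integrate out the irrelevant coordinates, and conclude via \cref{expectation_det}. The only differences are notational and in the order of exposition.
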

\begin{proof}
The following is very similar to the proof of \cite[Proposition 4.1]{distr}. 

We have $\pi_2^{-1}(e_1,\lambda) =V(e_1,\lambda)\times  \set{e_1}\times\set{\lambda}$, where  
\[V(e_1,\lambda) :=  X_1^d \lambda e_1 + \Big\{g\in(\cH_d^\HR)^n \mid g(e_1)=0 \Big\}.\]
Observe that $\pi_2^{-1}(e_1,\lambda)$ is isometric to $V(e_1,\lambda)$. So in the description of $J(e_1,\lambda)$ \cref{dfn_E} we integrate over~$V(e_1,\lambda)$:	\begin{equation}\label{pushforward_integral2}
	J(e_1,\lambda)=  \int_{f\in V(e_1,\lambda)}  \left\lvert \det DF_f(e_1,\lambda)|_{T_{e_1}\times\HR} \right\rvert  \varphi_{V(e_1,\lambda)}(f) \;\d V(e_1,\lambda)
	\end{equation}
Let $R:=\Big\{h\in \cH_d^\HR \mid h(e_1)=0, Dh(e_1)=0 \Big\}$. 
By \cref{decomp}, for any $f\in V(e_1,\lambda)$, there exist uniquely determined $h\in R^n$ and $M\in \HR^{n \times (n-1)}$ 
such that we can orthogonally decompose $f$ as
	\begin{equation}\label{aa2}
	f = X_1^d \lambda e_1 + X_1^{d-1}\sqrt{d}\, M \, (X_2,\ldots,X_n)^T\, +h.
	\end{equation}
The matrix representation of $DF_f(X,\Lambda)$ from \cref{jacobian} can the be written as
	\begin{equation*}
	DF_f(e_1,\lambda)=\begin{bmatrix} \partial_X f(e_1,\lambda) - \lambda I_{n},&
          -e_1\end{bmatrix} = \begin{bmatrix} (d-1)\lambda &
          \sqrt{d}\cdot a & -1 \\ 0 & \sqrt{d} A-\lambda I_{n-1} & 0 \end{bmatrix} \in\HR^{n\times (n+1)},
	\end{equation*}
where  $a\in\HR^{1 \times (n-1)}$ is the first row of $M$ and $A\in
\HR^{(n-1) \times (n-1)}$ is the matrix that is obtained by removing
the first row of $M$. Hence,
\begin{equation}\label{aa3}
\det DF_f(e_1,\lambda)|_{T_{e_1}\times \HR} = -\det\;(\sqrt{d} A-\lambda I_{n-1}).
\end{equation}
The summands in \cref{aa2} are pairwise orthogonal, which implies that
	\begin{align}\label{aa4}
	\varphi_{V(e_1,\lambda)}(f) &=\varphi_{\HR^n}(\lambda e_1)\cdot \varphi_{\HR^{(n-1)\times (n-1)}}(A)\cdot \varphi_{\HR^{n-1}}(a)\cdot \varphi_R(h)\\
	&= \frac{1}{\sqrt{2\pi}^{\,n-1}} \cdot\varphi(\lambda)\cdot \varphi_{\HR^{(n-1)\times (n-1)}}(A)\cdot \varphi_{\HR^{n-1}}(a)\cdot \varphi_R(h).\nonumber
	\end{align}
We plug \cref{aa3} and \cref{aa4} into \cref{pushforward_integral2}. By \cref{aa3}, $\det DF_f(e_1,\lambda)|_{T_{e_1}\times \HR}$ is independent of $a$ and $h$. We may therefore integrate over $a$ and $h$ without changing the value of the integral. From this we see that
\begin{align*}
J(e_1,\lambda)=& \frac{1}{\sqrt{2\pi}^{\,n-1}} \; \varphi(\lambda)\;
        \mean\limits_{A\sim N(\HR^{(n-1)\times(n-1)})} \left\lvert\det\left(\sqrt{d}A-\lambda I_{n-1}\right) \right\rvert\\
        = & \frac{\sqrt{d}^{\,n-1}}{\sqrt{2\pi}^{\,n-1}} \; \varphi(\lambda)\;
        \mean\limits_{A\sim N(\HR^{(n-1)\times(n-1)})} \left\lvert\det\left(A-\frac{\lambda}{\sqrt{d}}\, I_{n-1}\right) \right\rvert\\
        = &\frac{\sqrt{d}^{\,n-1}}{\sqrt{\pi}^{\,n}} \; \varphi(\lambda)\;\frac{ \Gamma(\frac{n}{2})}{\Gamma(n-1)}
         \left( e^{\lambda^2/(2d)}\; \Gamma\bigg(n-1,\frac{\lambda^2}{d}\bigg) + 2^{n-2}\left(\frac{\lambda^2}{2d}\right)^\frac{n-1}{2} \gamma\left(\frac{n-1}{2},\frac{\lambda^2}{2d}\right)\right);
	\end{align*}
the last line by \cref{expectation_det}.
\end{proof}
\vspace{-0.5cm}
\subsection{Proof of Theorem 1.1 (1)} 
The case $n=1$ is trivial. Let $n>1$. From \cref{main_step} we get that
	\begin{align*}
	E_{n,d} \; = \; &\frac{\sqrt{d}^{\,n-1}}{\sqrt{2\pi}\,\Gamma(n-1)} \;  \int_{\lambda\in\HR} e^{- \frac{\lambda^{2}}{2}(1-d^{-1})}\;\Gamma\bigg(n-1,\frac{\lambda^2}{d}\bigg)\; \d \lambda \\
&+\frac{\sqrt{2}^{\,n-3}}{\sqrt{2\pi}\;\Gamma(n-1)}\; \int_{\lambda\in\HR} e^{- \frac{\lambda^{2}}{2}}\;\norm{\lambda}^{n-1}\; \gamma\left(\frac{n-1}{2},\frac{\lambda^2}{2d}\right)\;\d \lambda.
	\end{align*}
Making the substitution of variables $x:=\lambda^2$, such that $ \d \lambda = \frac{\d x}{2\sqrt{x}}$, we obtain
	\begin{align*} \int_{\lambda\in\HR} e^{- \frac{\lambda^{2}}{2}(1-d^{-1})}\;\Gamma\bigg(n-1,\frac{\lambda^2}{d}\bigg)\; \d \lambda &= \int_{x>0} x^{-1/2} e^{-\frac{x}{2}(1-d^{-1})}\;\Gamma\bigg(n-1,\frac{x}{d}\bigg)\; \d x\\
	&= \frac{2^{n+1/2}\,\Gamma(n-\frac{1}{2})}{d^{n-1}(1+d^{-1})^{n-1/2}}\; ~_2F_1\left(1,n-\frac{1}{2};\frac{3}{2};\frac{d-1}{d+1}\right),
	\end{align*}
the last line by \cref{fancy_integrals}(1). In the same way, but using \cref{fancy_integrals}(2), we get 
	\begin{align*} 
	\int_{\lambda\in\HR} e^{-\frac{\lambda^ {2}}{2}}\;\lvert\lambda\rvert^{n-1}\; \gamma\left(\frac{n-1}{2},\frac{\lambda^2}{2d}\right)\;\d \lambda=&\int_{x>0} e^{- \frac{x}{2}}\;x^\frac{n-2}{2}\; \gamma\left(\frac{n-1}{2},\frac{x}{2d}\right)\;\d \lambda\\
	= &\frac{\sqrt{2}^{\,n+2}\;\Gamma(n-\frac{1}{2})}{\sqrt{d}^{\,n-1} (n-1) \,(1+d^{-1})^{n-\frac{1}{2}}}\,~_2F_1\left(1,n-\frac{1}{2};\frac{n+1}{2};\frac{1}{d+1}\right).
	\end{align*}
Hence,
\begin{align*}
	  E_{n,d}=&\;\frac{\sqrt{d}^{\,n-1}}{\sqrt{2\pi}\,\Gamma(n-1)} \;\frac{2^{n+1/2}\,\Gamma(n-\frac{1}{2})}{d^{n-1}(1+d^{-1})^{n-1/2}}\; ~_2F_1\left(1,n-\frac{1}{2};\frac{3}{2};\frac{d-1}{d+1}\right)\\
	&\hspace{0.5cm}+\frac{\sqrt{2}^{\,n-3}}{\sqrt{2\pi}\;\Gamma(n-1)}\;\frac{\sqrt{2}^{\,n+2}\;\Gamma(n-\frac{1}{2})}{\sqrt{d}^{\,n-1} (n-1) \,(1+d^{-1})^{n-1/2}}\,~_2F_1\left(1,n-\frac{1}{2};\frac{n+1}{2};\frac{1}{d+1}\right)\\
		 = &\;\frac{2^{n-1}\sqrt{d}^{\,n}}{\sqrt{\pi}(d+1)^{n-\frac{1}{2}}} \;\frac{\Gamma(n-\frac{1}{2})}{\,\Gamma(n)}\;\left[2(n-1)\; ~_2F_1\left(1,n-\frac{1}{2};\frac{3}{2};\frac{d-1}{d+1}\right)+  ~_2F_1\left(1,n-\frac{1}{2};\frac{n+1}{2};\frac{1}{d+1}\right)\right];
\end{align*}
for the last line we have used that $(n-1)\Gamma(n-1)=\Gamma(n)$. 
\subsection{Proof of Theorem 1.1 (2)}
Recall from \cref{main_thm} (1) that
	\[E_{n,d}=\frac{2^{n-1}\sqrt{d}^{\,n}\;\Gamma(n-\frac{1}{2})}{\sqrt{\pi}(d+1)^{\,n-\frac{1}{2}}\Gamma(n)}\; \left[2(n-1)~_2F_1\left(1,n-\frac{1}{2};\frac{3}{2};\frac{d-1}{d+1}\right)
	+~_2F_1\left(1,n-\frac{1}{2};\frac{n+1}{2};\frac{1}{d+1}\right)\right].\]
By \cref{hypergeom_as_beta}(4) we can substitute
	\begin{equation*}\label{eq50}
	~_2F_1\left(1,n-\frac{1}{2};\frac{3}{2};\frac{d-1}{d+1}\right)= \frac{(d+1)^{n-1}}{2^{n}} \sum_{j=0}^{n-2} \binom{n-2}{j} \frac{1}{j+\frac{1}{2}} \left(-\frac{d-1}{d+1}\right)^j.
	\end{equation*}
We now distinguish two cases. If $n=2k$ is even, by \cref{hypergeom_as_beta}(5) we have
	\begin{equation}\label{eq51}
	~_2F_1\left(1,n-\frac{1}{2};\frac{n+1}{2};\frac{1}{d+1}\right)= \frac{(n-1)(d+1)^{\frac{n}{2}}}{2 d^{\frac{n}{2}}}  \sum_{j=0}^{k-1} \binom{k-1}{j}  \frac{1}{j+k-\frac{1}{2}} \left(-\frac{1}{d+1}\right)^{j},
	\end{equation}
which shows that
\begin{equation*}\label{eq52}
E_{n,d}=\frac{1}{\sqrt{\pi}} \frac{\Gamma(n-\frac{1}{2})}{\,\Gamma(n-1)} \left (\frac{\sqrt{d}^{\,n}}{\sqrt{d+1}}\; \sum_{j=0}^{n-2} \binom{n-2}{j} \frac{ \left(-\frac{d-1}{d+1}\right)^j}{j+\frac{1}{2}}
 + 2^{n-2}\sum_{j=0}^{k-1} (-1)^j\binom{k-1}{j}  \frac{\left(\frac{1}{d+1}\right)^{j+k-\frac{1}{2}}}{j+k-\frac{1}{2}} \right).
 \end{equation*}
If $n=2k+1$ is odd, then by \cref{hypergeom_as_beta}(6) we have
	\begin{equation}\label{eq53}
	~_2F_1\left(1,n-\frac{1}{2};\frac{n+1}{2};\frac{1}{d+1}\right)= \frac{(n-1)(d+1)^{n-\frac{1}{2}}}{2 d^{\frac{n}{2}}} \sum_{j=0}^{k-1} (-1)^j \binom{k-1}{j} \frac{1-\left(\frac{d}{d+1}\right)^{j+k+\frac{1}{2}}}{j+k+\frac{1}{2}}
	\end{equation}
and therefore in this case $E_{n,d}$ equals
\begin{equation*}
\frac{1}{\sqrt{\pi}} \frac{\Gamma(n-\frac{1}{2})}{\,\Gamma(n-1)} \left (\frac{\sqrt{d}^{\,n}}{\sqrt{d+1}} \sum_{j=0}^{n-2} \binom{n-2}{j} \frac{ \left(-\frac{d-1}{d+1}\right)^j}{j+\frac{1}{2}}
 + 2^{n-2}\sum_{j=0}^{k-1} (-1)^j\binom{k-1}{j}  \frac{1-\left(\frac{d}{d+1}\right)^{j+k+\frac{1}{2}}}{j+k+\frac{1}{2}} \right),
 \end{equation*}
which finishes the proof.
\subsection{Proof of Theorem 1.1 (3)} The case $d=1$ was proven in \cite[Corollary 5.2]{real_eigenvalues}. Let $d>1$. Recall from \cref{main_thm} (1) that $\frac{E_{n,d}}{\sqrt{D(n,d)}}$ equals
		\[\frac{1}{\sqrt{D(n,d)}} \,\frac{2^{n-1}\sqrt{d}^{\,n}\;\Gamma(n-\frac{1}{2})}{\sqrt{\pi}(d+1)^{\,n-\frac{1}{2}}\Gamma(n)}\; \left[2(n-1)~_2F_1\left(1,n-\frac{1}{2};\frac{3}{2};\frac{d-1}{d+1}\right)
	+~_2F_1\left(1,n-\frac{1}{2};\frac{n+1}{2};\frac{1}{d+1}\right)\right].\]
For large $n$ we have $D(n,d) = \frac{d^n-1}{d-1} \sim \frac{d^{n}}{d-1}$, so that $\lim\limits_{n\to\infty} \frac{E_{n,d}}{\sqrt{D(n,d)}}$ equals
\begin{align*}
\lim\limits_{n\to\infty}\frac{2^{n-1}\sqrt{d-1}\;\Gamma(n-\frac{1}{2})}{\sqrt{\pi}(d+1)^{\,n-\frac{1}{2}}\Gamma(n)}\; \left[2(n-1)~_2F_1\left(1,n-\frac{1}{2};\frac{3}{2};\frac{d-1}{d+1}\right)
	+~_2F_1\left(1,n-\frac{1}{2};\frac{n+1}{2};\frac{1}{d+1}\right)\right].
	\end{align*}
From description \cref{def_hypergeom} of $~_2F_1(a,b;c;x)$ we see that
	\begin{equation}\label{eq54}
	~_2F_1\left(1,n-\frac{1}{2};\frac{n+1}{2};\frac{1}{d+1}\right) = \sum_{k=0}^\infty \frac{(n-\frac{1}{2})_k}{(\frac{n+1}{2})_k}\; \frac{1}{(d+1)^k} \leq  \sum_{k=0}^\infty \left(\frac{2}{d+1}\right)^k \stackrel{d>1}{=} \frac{d+1}{d-1}.
	\end{equation}
By \cite[43:6:12]{atlas} we have that $\Gamma(n)\sim\Gamma(n-\frac{1}{2})\sqrt{n}$ for large $n$. Together with \cref{eq54} and $d>1$ this shows that 
	\begin{equation*}\frac{2^{n-1}\sqrt{d-1}\;\Gamma(n-\frac{1}{2})}{\sqrt{\pi}(d+1)^{\,n-\frac{1}{2}}\Gamma(n)}\; ~_2F_1\left(1,n-\frac{1}{2};\frac{n+1}{2};\frac{1}{d+1}\right)\stackrel{n\to\infty}{\longrightarrow } 0.
	\end{equation*}
and hence 
\begin{align*}
\lim\limits_{n\to\infty} \frac{E_{n,d}}{\sqrt{D(n,d)}}
&=\lim\limits_{n\to\infty}\frac{2^{n-1}\sqrt{d-1}\;\Gamma(n-\frac{1}{2})}{\sqrt{\pi}(d+1)^{\,n-\frac{1}{2}}\Gamma(n)}\; 2(n-1)~_2F_1\left(1,n-\frac{1}{2};\frac{3}{2};\frac{d-1}{d+1}\right)\\
	&=\lim\limits_{n\to\infty}\frac{2^{n}\sqrt{d-1}\;\Gamma(n-\frac{1}{2})}{\sqrt{\pi}(d+1)^{\,n-\frac{1}{2}}\Gamma(n-1)}\;~_2F_1\left(1,n-\frac{1}{2};\frac{3}{2};\frac{d-1}{d+1}\right).
	\end{align*}
Using \cref{hypergeom_as_beta}(1) we replace $~_2F_1\left(1,n-\frac{1}{2};\frac{3}{2};\frac{d-1}{d+1}\right)$ with the incomplete beta function so that 
\begin{align*}
\lim\limits_{n\to\infty} \frac{E_{n,d}}{\sqrt{D(n,d)}}
&=\lim\limits_{n\to\infty}\frac{2^{n}\sqrt{d-1}\;\Gamma(n-\frac{1}{2})}{\sqrt{\pi}(d+1)^{\,n-\frac{1}{2}}\Gamma(n-1)}\;\frac{(d+1)^{n-1}}{2^{n}}\;\frac{\sqrt{d+1}}{\sqrt{d-1}}\;B\left(\frac{1}{2},n-1,\frac{d-1}{d+1}\right)\\
&=\lim\limits_{n\to\infty}\frac{\Gamma(n-\frac{1}{2})}{\sqrt{\pi}\;\Gamma(n-1)}\;B\left(\frac{1}{2},n-1,\frac{d-1}{d+1}\right)
\end{align*}
\cref{beta_asymptotic} tells us that for large $n$ we have 
	\begin{equation*}B\left(\frac{1}{2},n-1,\frac{d-1}{d+1}\right) \sim \frac{\sqrt{\pi}\;\Gamma(n-1)}{\Gamma(n-\frac{1}{2})} \left[ \frac{1}{2} \;\mathrm{erfc}\left(-\omega\right) + b\right],
	\end{equation*}
where $\mathrm{erfc}(z)$ is the complementary error function and		
	\begin{equation*}\omega = \left[\frac{-1}{2}\ln \left(2n-1\right) - \frac{\ln\left(\frac{d-1}{d+1}\right)}{2} + (n-1)\ln\left(\frac{n-1}{n-\frac{1}{2}}\right) -(n-1)\ln\left(\frac{2}{d+1}\right)\right]^\frac{1}{2}
	\end{equation*}
and
\begin{equation*}b= \left(\frac{d-1}{2\pi (d+1) (n-1)}\right)^\frac{1}{2}\;\left(\frac{2(n-\frac{1}{2})}{(d+1)(n-1)}\right)^{n-1} \left(\frac{n-1}{\frac{1}{2}-(n-\frac{1}{2})\frac{d-1}{d+1}} + \frac{\sqrt{2n-1}}{\omega}\right) (1+\cO(n^{-1})).
\end{equation*}
We have $-\omega = \Theta(n)$ and hence $\omega \stackrel{n\to\infty}{\longrightarrow}  -\infty$ and $b\stackrel{n\to\infty}{\longrightarrow} 0$, which shows that
\begin{equation*}
\lim\limits_{n\to\infty} \frac{E_{n,d}}{\sqrt{D(n,d)}} =  \lim\limits_{n\to\infty} \left[\frac{1}{2}\;\mathrm{erfc}(-\omega) + b\right] = 1.
\end{equation*}
This finishes the proof.
\subsection{Proof of Theorem 1.1 (4)}
From \cref{main_thm} (2) we obtain
	\begin{equation*}
	\lim\limits_{d\to\infty} \frac{E_{n,d}}{\sqrt{D(n,d)}} =   \lim\limits_{d\to\infty}\frac{1}{\sqrt{D(n,d)}}  \;\frac{1}{\sqrt{\pi}}\;\frac{\Gamma(n-\frac{1}{2})}{\Gamma(n-1)} \frac{\sqrt{d}^{\,n}}{\sqrt{d+1}}\; \sum_{j=0}^{n-2} \binom{n-2}{j} \frac{ \left(-\frac{d-1}{d+1}\right)^j}{j+\frac{1}{2}} 
	\end{equation*}
For large $d$ we have $D(n,d)=\frac{d^n-1}{d-1} \sim d^{n-1}$ and $\frac{d-1}{d+1}\sim 1$, which shows that
	\begin{equation*}
	\lim\limits_{d\to\infty} \frac{E_{n,d}}{\sqrt{D(n,d)}} =  \frac{1}{\sqrt{\pi}}\;\frac{\Gamma(n-\frac{1}{2})}{\Gamma(n-1)}  \sum_{j=0}^{n-2} \binom{n-2}{j} \frac{ \left(-1\right)^j}{j+\frac{1}{2}}  =\frac{1}{\sqrt{\pi}}\;\frac{\Gamma(n-\frac{1}{2})}{\Gamma(n-1)}\; B\left(\frac{1}{2},n-1,1\right);
	\end{equation*}
the last equality by  \cite[58:4:3]{atlas}. By \cite[58:1:1]{atlas} we have $B\left(\frac{1}{2},n-1,1\right) = \sqrt{\pi}\; \frac{\Gamma(n-1)}{\Gamma(n-\frac{1}{2})}$, from which the claim follows.
\subsection{Proof of Theorem 1.1 (5)} We proceed as in the proof for \cite[Theorem 5.1]{real_eigenvalues}. Recall from \cref{main_step} that
 \begin{equation}\label{eq20}
 E_{n,d}\;=\mean\limits_{f\sim N((\cH_d^\HR)^n)} \,\#_\HR(f) = \mean\limits_{\lambda \sim N(0,1)} F_{n-1,d}(\lambda)
 \end{equation}
We first compute the generating function of the $F_{n,d}(\lambda)$. Fix $d\geq 1$,  $\lambda\in \HR$ and $- \frac{1}{\sqrt{d}}< z< \frac{1}{\sqrt{d}}$. In the definition~\cref{F_nd} of~$F_{n,d}(\lambda)$ replace the gamma functions by the respective integrals from \cref{gamma_fct}. Since the integrands in the gamma function are all positive, we may apply Fubini's theorem to interchange summation and integration. This yields
 \begin{align*}
& \sum_{n=0}^\infty F_{n,d}(\lambda)\,z^n \\
 &=1+\sum_{n=1}^\infty F_{n,d}(\lambda)\,z^n \\
 &= 1+ e^\frac{\lambda^2}{2d}
\int_{t=\frac{\lambda^2}{d}}^\infty e^{-t}  \left[\sum_{n=1}^\infty \frac{\sqrt{d}^{\,n} }{\Gamma(n)}\, t^{n-1}  z^n\right] \,\d t
+   \int_{t=0}^\frac{\lambda^2}{2d}  e^{-t}\left[\sum_{n=1}^\infty \frac{\sqrt{d}^n}{\Gamma(n)}\,2^{n-1}\left(\frac{\lambda^2}{2d}\right)^\frac{n}{2}  t^{\frac{n}{2}-1}\,z^n\right]\, \d t\\
&=1+ e^\frac{\lambda^2}{2d}\;z\sqrt{d}\,
\int_{t=\frac{\lambda^2}{d}}^\infty  e^{-t(1-z\sqrt{d})}\;\d t
+   \frac{1}{\sqrt{2}}\,z\,\norm{\lambda}  \int_{t=0}^\frac{\lambda^2}{2d}  \frac{e^{-t+ z\sqrt{2t}\,\norm{\lambda}}}{\sqrt{t}}\;\d t\\
&\hspace{0.5cm} (\text{in the right hand integral we now substitute } s^2=t)\\
&=1+ e^\frac{\lambda^2}{2d}\;z\sqrt{d}\;
\frac{e^{-\frac{\lambda^2}{d}(1-z\sqrt{d})}}{1-z\sqrt{d}} 
+   \frac{\sqrt{\pi}}{\sqrt{2}} \;z\,\norm{\lambda} \; e^\frac{z^2\lambda^2}{2} \left[ \frac{2}{\sqrt{\pi}}\,\int_{s=0}^\frac{\norm{\lambda}}{\sqrt{2d}}  e^{-\left(s-\frac{z\,\norm{\lambda}}{\sqrt{2}}\right)^2}\;\d s\right]\\
&=1 + \frac{z\sqrt{d}}{1-z\sqrt{d}}\; e^\frac{-\lambda^2\,(1-2z\sqrt{d})}{2d} +  \frac{\sqrt{\pi}}{\sqrt{2}} \;z\,\norm{\lambda} \; e^\frac{z^2\lambda^2}{2} \left[\mathrm{erf}\left(\frac{z\,\norm{\lambda}}{\sqrt{2}}\right) + \mathrm{erf}\left(\frac{(1-z\sqrt{d})\,\norm{\lambda}}{\sqrt{2d}}\right)\right],
\end{align*}
where $\mathrm{erf}(x):=\frac{2}{\sqrt{\pi}}\int_{0}^x e^{-t^2}\d t$ denotes the error function. By \cref{eq20} we have 
\[\sum_{n=1}^\infty E_{n,d} \;z^n = z\,\mean\limits_{\lambda \sim N(0,1)} \sum_{n=0}^\infty F_{n,d}(\lambda) \,z^n\]
and hence $\sum_{n=1}^\infty E_{n,d} \;z^n = z\,(s_1 +s_2+s_3),$ where 
\begin{equation*}
s_1 = \mean\limits_{\lambda \sim N(0,1)} 1 = 1,\quad
s_2 =  \frac{\sqrt{d}\,z}{1-\sqrt{d}\,z}\;\mean\limits_{\lambda \sim N(0,1)} e^\frac{-\lambda^2\,(1-2\sqrt{d}\,z)}{2d} = \frac{zd}{(1-\sqrt{d}z)\;\sqrt{1+d-2z\sqrt{d}}}
\end{equation*}
(for the this we used $\mean\limits_{\lambda \sim N(0,1)} e^\frac{-a\lambda^2}{2} = \frac{1}{\sqrt{a+1}}$), and
\begin{equation*}
s_3 =\frac{\sqrt{\pi}}{\sqrt{2}} \;z\mean\limits_{\lambda \sim N(0,1)}\,\norm{\lambda} e^\frac{z^2\lambda^2}{2} \left[\mathrm{erf}\left(\frac{z\norm{\lambda}}{\sqrt{2}}\right) + \mathrm{erf}\left(\frac{(1-z\sqrt{d})\norm{\lambda}}{\sqrt{2d}}\right)\right] \stackrel{\text{calc}}{=} \frac{z^2}{1-z^2} + \frac{z(1-z\sqrt{d})}{(1-z^2) \,\sqrt{1+d-2z\sqrt{d}}}.
\end{equation*}
Thus 
	\[s_1+s_2+s_3\stackrel{\text{calc}}{=}\frac{1-z\sqrt{d}  +z\sqrt{d-2z\sqrt{d} +1}}{(1-z^2)(1-z\sqrt{d})},\]
which finishes the proof.
\begin{rem} The symbol $\stackrel{\text{calc}}{=}$ indicates that we computed the respective equality with \textsc{maple 18} \cite{maple}.\end{rem}
\section{Gaussian tensors and gaussian polynomial systems}
Recall that the tensor $A=(A_{i_0,i_1,\ldots,i_{d}}) \in \left(\HR^{n}\right)^{\otimes (d+1)}$ is said to be gaussian, if $A_{i_0,i_1,\ldots,i_{d}}\sim N(0,1)$. In fact, the variance is defined in the way, such that the coefficients of $f_A(X)=AX^d$ in the Bombieri-Weyl basis (see \cref{se:geo-framework}) are i.i.d $N(0,1)$-random variables. This is summarized in the following lemma.
\begin{lemma}\label{additional_lemma}
Let $A\in \left(\HR^{n}\right)^{\otimes (d+1)}$ be gaussian. Then $f_A\sim N((\cH_d^\HR)^n)$.
\end{lemma}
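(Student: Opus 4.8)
The plan is to bypass the (non-injective) contraction map entirely and simply read off the law of the Bombieri--Weyl coefficients of $f_A$ directly from the entries of $A$.

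First I would expand each component of $f_A$. For $i\in\{1,\ldots,n\}$,
\[
f_{A,i}(X)=A(e_i,X,\ldots,X)=\sum_{i_1,\ldots,i_d=1}^n A_{i,i_1,\ldots,i_d}\,X_{i_1}\cdots X_{i_d}.
\]
Collecting monomials, the coefficient of $X^\alpha$ in $f_{A,i}$ (for $\lvert\alpha\rvert=d$) is $c_{i,\alpha}=\sum A_{i,i_1,\ldots,i_d}$, the sum taken over all ordered tuples $(i_1,\ldots,i_d)$ whose underlying multiset is $\alpha$; since $A$ is not assumed symmetric these are pairwise distinct, hence independent, entries of $A$, and there are exactly $\binom{d}{\alpha}=d!/(\alpha_1!\cdots\alpha_n!)$ of them. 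Writing $f_{A,i}=\sum_{\lvert\alpha\rvert=d} b_{i,\alpha}\,e_\alpha$ in the Bombieri--Weyl basis $e_\alpha=\sqrt{\binom{d}{\alpha}}\,X^\alpha$ then gives $b_{i,\alpha}=c_{i,\alpha}/\sqrt{\binom{d}{\alpha}}$.

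Next I would pin down the joint distribution of the $b_{i,\alpha}$. By \cref{sum_of_normal}, the variable $c_{i,\alpha}$ --- a sum of $\binom{d}{\alpha}$ independent $N(0,1)$ variables --- is $N(0,\binom{d}{\alpha})$-distributed, so $b_{i,\alpha}\sim N(0,1)$ for every $(i,\alpha)$. Moreover the sets of tensor entries entering different coordinates $b_{i,\alpha}$ are pairwise disjoint (they differ either in the first index $i$ or in the multiset $\alpha$ of the remaining indices), so the $b_{i,\alpha}$ are measurable functions of disjoint blocks of the i.i.d.\ family $(A_{i_0,\ldots,i_d})$ and are therefore jointly independent. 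As $\cE=\{e_\alpha\}$ is orthonormal for the Bombieri--Weyl inner product, a polynomial system whose Bombieri--Weyl coordinates are i.i.d.\ $N(0,1)$ has density $\varphi_{(\cH_d^\HR)^n}$; hence $f_A\sim N((\cH_d^\HR)^n)$.

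There is no real obstacle here: the only point needing care is the combinatorial bookkeeping --- checking that the number of ordered $d$-tuples with prescribed multiset $\alpha$ is $\binom{d}{\alpha}$, and that the blocks of entries feeding distinct Bombieri--Weyl coordinates are genuinely disjoint, which is exactly what promotes ``each coordinate is $N(0,1)$'' to ``the coordinates are jointly i.i.d.\ $N(0,1)$''.
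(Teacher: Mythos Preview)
Your proof is correct and follows essentially the same route as the paper's: both expand $f_{A,i}$ in terms of the tensor entries, identify the Bombieri--Weyl coefficient of $X^\alpha$ as $\binom{d}{\alpha}^{-1/2}$ times a sum of $\binom{d}{\alpha}$ independent $N(0,1)$ entries, and invoke \cref{sum_of_normal}. If anything, your version is slightly more complete, since you explicitly verify that the entry blocks feeding distinct $(i,\alpha)$ are disjoint and hence that the Bombieri--Weyl coefficients are jointly independent --- a point the paper leaves implicit.
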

\begin{proof}
Let $A\in\HR^{n^{d+1}}$ be gaussian and write $f_A(X)=(f_1(X),\ldots,f_n(X))$. We have to show that every coefficient of each $f_i$ in the Bombieri-Weyl basis $\{\sqrt{\binom{d}{\alpha}}\, X^\alpha\}$ is $N(0,1)$-distributed. Fix $1\leq j\leq n$ and suppose that
	\begin{equation}\label{5.1}
	f_j(X)=\sum_{\alpha_1+\ldots+\alpha_n = d} \lambda_\alpha \sqrt{\binom{d}{\alpha}}\, X^\alpha.
	\end{equation}
By definition we have $f_A(X)=AX^d$, which, by \cref{1}, implies that 
	\begin{equation}\label{5.2}
	f_j(X)=\sum_{1\leq i_1,\ldots,i_d\leq n} A_{j,i_1,\ldots,i_d} \;\; X_{i_1}\ldots X_{i_d}.
	\end{equation}
Comparing \cref{5.1} and \cref{5.2} reveals that for each $\alpha$ we have\[\lambda_\alpha\, \sqrt{\binom{d}{\alpha}} = \sum\limits_{\substack{(i_1,\ldots,i_d): \\ X^\alpha = X_{i_1}\ldots X_{i_d}}} A_{j,i_1,\ldots,i_d}.\]
Applying the rule of summation of normal distributed random variables from \cref{sum_of_normal} (2) to this equation shows that
	\[\lambda_\alpha\, \sqrt{\binom{d}{\alpha}} \sim N(0,\sigma^2), \text{ where } \sigma^2 = \# \cset{(i_1,\ldots,i_d)}{X^\alpha = X_{i_1}\ldots X_{i_d}} = \binom{d}{\alpha}.\]
Hence, $\lambda_\alpha\sim N(0,1)$, by \cref{sum_of_normal} (1). This finishes the proof.
	\end{proof}
{
\bibliographystyle{plain}
\bibliography{literature}}
\end{document}